\numberwithin{equation}{section}
\theoremstyle{plain}
\newtheorem{theorem}{Theorem}[section]
\newtheorem{lemma}[theorem]{Lemma}
\newtheorem{corollary}[theorem]{Corollary}
\newtheorem{question}[theorem]{Question}
\newtheorem{definition}[theorem]{Definition}
\newtheorem{example}[theorem]{Example}
\theoremstyle{remark}
\newtheorem*{remark}{Remark}
\newtheorem{case[theorem]}{Case}
\def\supp{\hbox{supp\,}}
\def\norm#1.#2.{\lVert#1\rVert_{#2}}
  \newtheorem*{Wang}{{\bf Theorem A}}
  \newtheorem*{Fuehr}{{\bf Theorem B}}
\title{Riesz Wavelets, Tiling and Spectral Sets in LCA  Groups}
\author{Azita Mayeli}                         
\begin{document}

 \maketitle

\begin{abstract}  
This paper is devoted to the study of  geometry properties of wavelet and Riesz wavelet  sets on locally compact abelian groups. The catalyst for our research is a result by Wang (\cite{Wang2002}, Theorem 1.1) in the Euclidean wavelet theory. Here, we extend the result  to  wavelet and Riesz wavelet    collection of sets in  infinite locally compact abelian groups. 
\end{abstract}
 
{\it Keywords:} LCA groups, wavelet set,  Riesz wavelet collection of sets,  spectral set,   tiling with multiplicity, 
\\
{\it AMS 2000 Mathematics Subject Classification:}  Primary: 43A25, 43A46, 42C40;  Secondary: 52C22
 
\section{Introduction}  

Let $\varphi\in L^2(\Bbb R^n)$, $n\geq 1$. $\varphi$ is called a wavelet for $L^2(\Bbb R^n)$ if there exists countable sets $\mathcal L\subset \Bbb R^n$ and $\mathcal D\subset GL(n,\Bbb R)$ such that the  family 
\begin{align}\label{classical wavelet family}
\{ |\det D|^{1/2} \varphi(Dx-t): \ D\in \mathcal D,  t\in \mathcal L\} 
\end{align}
is an orthonormal basis for $L^2(\Bbb R^n)$. In this case, the sets $\mathcal D$ and $\mathcal L$ are called  {\it dilation} and {\it translation} sets, respectively. Wavelets are important tools in approximation theory due to their time and frequency localization property. 

One type of wavelets with simplest structures are those  which are supported on a   Lebesgue measurable set in $\Bbb R^n$ with finite and positive measure. The most basic one is the Haar wavelet $\varphi(x)= 1_{[0,1/2)} - 1_{[1/2, 1)}$, constructed by  Alfred Haar in 1910,  is a one-dimensional example of this kind and it serves as a prototypical wavelet. The dilation and translation sets associated to the Haar wavelet are given by $\mathcal D=\{2^m: \ m\in \Bbb Z\}$ (dyadic dilation) and $\mathcal L= \Bbb Z$.   Another type of simple wavelets are  those  whose  Fourier transforms are characteristic functions of a Lebesgue measurable domain in $\Bbb R^n$ with finite measure.  A typical example of these kind is {\it Littlewood-Paley} or {\it  Shannon} wavelet  whose Fourier transform is $1_{[-1,-1/2)\cup (1/2,1]}$. 
Shannon wavelet is a  significant tool  in sampling and reconstruction of functions.   Haar wavelet and Shannon wavelet  are situated at the opposite side of each other and "somehow" related. However, in this paper we shall only focus on Shannon-type wavelets, i.e., those with Fourier transform  supported in a domain with finite measure.

 The Fourier support of the Shannon wavelet $I:=[-1,-1/2)\cup (1/2,1]$  has this nice geometry property that its dyadic dilations by $\{2^n: \ n\in \Bbb Z\}$ as well as it integer translations tile   $\Bbb R$.  The sets with the similar tiling properties have been considered in the paper by Dai and Larson \cite{DL} where the authors proved the following results: Given a measurable set $E\subset \Bbb R$ with finite and positive measure, the function $\varphi$ with $\hat \varphi= 1_E$ is a wavelet with dyadic dilation and integer translations if and only if $E$ tiles $\Bbb R$ by dyadic dilations as well as by integer translations. This result and characterization of wavelets in terms of tilings properties of their Fourier supports  was  somehow generalized by Wang \cite{Wang2002}  in $n$-dimensional Euclidean spaces, where the dilation set is any   countable  subset of  invertible matrices and  the translation set is any countable  subset  in $\Bbb R^n$. Before we recall his  result, we need several definitions. 
 
\begin{definition} A measurable subset $\Omega\subset \Bbb R^n$ with finite and positive measure is called {\it wavelet set} if for the function 
$\varphi\in L^2(\Bbb R^n)$ with $\hat\varphi= 1_\Omega$  the family (\ref{classical wavelet family}) is an orthonormal basis for $L^2(\Bbb R^n)$. \end{definition} 

These wavelets are also known as MSF wavelets (minimally supported frequency wavelets)  or in older literatures as  s-wavelets. The wavelet sets and minimally supported frequency wavelets were introduced by Fang and Wang (\cite{Fang-Wang96}) and studied exclusively by Hernandez, Wang and Weiss (\cite{HWW96,HWW97}). For a series of paper on wavelet sets see e.g., \cite{BaggettMM-wavelet sets, BB2009, DLS-wavelet sets,Olafsson_wavelet sets}  and references contained therein. 

{\it Additive and multiplicative tilings.} We say a  set $\Omega$ tiles $\Bbb R^n$ {\it additively}  with respect to a   countable set $\mathcal L\subset \Bbb R^n$ if 
$$\sum_{\ell \in \mathcal L} 1_{\Omega}(x-\ell) = 1 \quad a.e. \ \ x\in \Bbb R^n .$$
Equivalently, $\Bbb R^n  =\cup_{\ell\in \mathcal L} \Omega+\ell$, where for any distinct $\ell$ and $\ell\rq{}$, the sets $\Omega+\ell$ and $\Omega+\ell\rq{}$ are disjoint in measure,  i.e., $|\Omega+\ell \cap \Omega+\ell\rq{}| = 0$. Here, $|\Omega|$ denotes the Lebsgue measure. 
  We say $\Omega$  is a {\it multiplicative tiling set} for  $\Bbb R^n$   with respect to a collection of $n$ by $n$ invertible matrices $A$   if $\{\alpha(\Omega): \alpha\in   A\}$ is a partition for $\Bbb R^n$, i.e., 
$$\sum_{\alpha\in   A} 1_{\alpha(\Omega)}(x) = 1 \quad a.e.  \ x\in \Bbb R^n .$$
This is also equivalent to say that $\Bbb R^n = \cup_{\alpha\in   A} \alpha(\Omega)$ where for any $ \alpha \neq \alpha\rq{}$ the two sets $\alpha(\Omega)$ and $\alpha\rq{}(\Omega)$ are disjoint in measure.

The characterization of the wavelet sets in terms of  translation and multiplicative tiling was obtained by Wang  (Theorem 1.1, \cite{Wang2002}). 

\vskip.125in

\begin{Wang}\label{Wang} {\it Let $\Omega\subset \Bbb R^n$ with finite and positive Lebesgue measure, $ A\subset GL(n, \Bbb R)$  and  $\mathcal L$ be a countable subset of $\Bbb R^n$.   For $\alpha\in  A$, let $\alpha^{\tau}$ denote the transpose matrix of $\alpha$. If the set $\{\alpha^{\tau}(\Omega):   \alpha\in   A\}$ tiles $\Bbb R^n$ and the set of  exponentials $\mathcal E_{\mathcal L}:=\{e^{2\pi i l\cdot x}: \ l\in \mathcal L\}$ is an orthogonal basis for $L^2(\Omega)$, then $\Omega$ is a wavelet set with respect to the dilation set $  A$ and translation set $\mathcal L$. The converse also holds, provided that $0\in \mathcal L$. }
\end{Wang} 

Here, $l \cdot x$ denote the inner product in $\Bbb R^n$ and ``$x$\rq\rq{} is generic.

In his paper \cite{Wang2002}, Wang shows that  the additional condition $0\in \mathcal L$ can not be dropped. Indeed,   $\Omega= [-1,-1/2]\cup [1/2,1]$  is a wavelet set with respect to the dilations $M= \{\pm 2^n\}_{n\in \Bbb Z}$ and translations $\mathcal T= 2\Bbb Z +1/6$.  However,  $\Omega$  nor tiles  $\Bbb R$  multiplicatively by $M$ neither
the exponentials $\mathcal E_{\mathcal T}$ is dense in $L^2(\Omega)$. 
  The latter is true  since, for example,  the non-zero function $g(x)= e^{-\pi i x/3}$ is orthogonal to all   elements of $\mathcal E_{\mathcal T}$ with inner product in $L^2(\Omega)$.

In the classical context,    given $\Omega$ and $\Lambda$, the  $(\Omega, \Lambda)$ is called  {\it spectral pair} if the exponentials $\mathcal E_\Lambda:=\{e^{2\pi i\lambda\cdot x}: \ \lambda\in \Lambda\}$ from  an orthogonal basis for $L^2(\Omega)$. In this case, $\Omega $ is spectral and $\Lambda$ is spectra (or spectrum).  It has been shown by Fuglede  (\cite{Fug74}) that when 
 $\Lambda$ is a lattice in $\Bbb R^n$, then the existence of exponential orthogonal basis $\mathcal E_\Lambda$ for $L^2(\Omega)$  is equivalent to say that $\Omega$ tiles $\Bbb R^n$ additively  by  the  dual  lattice $\Lambda^\perp$. 
        The relation between exponential basis for spectral sets and their tiling property in general  
   has been conjectured by Fuglede in \lq{}74. The Fuglede Conjecture   
 asserts   that   given a bounded set $\Omega \subset {\Bbb R}^d$ of positive Lebesgue measure,  $L^2(\Omega)$ has an orthogonal basis of exponentials $\mathcal E_\Lambda= \{e^{2\pi i  \lambda \cdot x  } \}_{\lambda\in \Lambda}$ for some countable subset $\Lambda\subset \Bbb R^n$ if and only if $\Omega$ tiles ${\Bbb R}^d$ by additively by some translations $\mathcal T$. 
The Fuglede Conjecture led to considerable activity in the past three decades. 
 For example, in 2001,  Iosevich, Katz, Tao  (\cite{IKT03}) proved the conjecture for complex planar domain.  In \cite{L01} and \cite{L02}, \L aba established the conjecture for the union of two intervals on the real line and provided a series of
connections between the study of orthonormal bases and interesting problems in algebraic number theory. 
However, 
in 2004, Tao (\cite{T04}) disproved the Fuglede conjecture for dimension $5$ and higher, followed by 
  \cite{KM06} {\color{red} double check the reference}, where Kolountzakis and Matolcsi also disprove   the Fuglede Conjecture for dimensions $3$ and higher.  Yet the conjecture may still be true in several important special cases. 
  In spite of  the disproof of its general validity, the conjecture has  still generated many interesting results.  Most recently, it was proved that the Fuglede Conjecture holds in some abelian finite groups (\cite{IMP}). For the link between tiling property and existence of general bases in locally compact abelian groups see \cite{BHM-Fuglede}.

{\it $k$-additive tiling.} 
Given an integer number $k\geq 1$, we say $\Omega$  multi-tiles  $\Bbb R^d$  additively by  a multiset $\mathcal L$ with multiplicity  $k$ (or is a $k$-additive tiling) when 
$$\sum_{\ell \in \mathcal L} 1_\Omega(x-\ell) = k  \quad \ a.e. \  x\in \Bbb R^n .$$ 
By the additive multi-tiling property,  almost every point in $\Bbb R^n$ can be covered by exactly $k$-translations of $\Omega$. In dimension $d=2$, it is known that 
  every $k$-additive tiling of $\Bbb R^2$ by a convex polygon,  which  is  not  a  parallelogram,  is  a $k$-additive tiling  with  a  finite  union  of  two-dimensional lattices (\cite{Kolountzakis2001}). 
When $k=1$, then we have the tiling situation. The multi-tiling property of a bounded measurable set  is a sufficient condition for existence of exponentials set $\mathcal E_{\Lambda}$ which is  a Riesz bases for $L^2(\Omega)$.  This result was proved by Kolountzakis (\cite{K-Riesz15}, Theorem 1).

An extension of the sufficiency  part    of Theorem A has been recently proved by F\"uhr and Maus \cite{FuehrMaus15} where the authors replace the additive  tiling   by additive multi-tiling property, thus the orthogonality by Riesz bases condition. Their result is the following. 

\vskip.125in 

\begin{Fuehr}\label{RB-Hartmut}(\cite{FuehrMaus15}, Theorem 1.2) {\it Let $\Omega\subset \Bbb R^n$ be a bounded and measurable set with positive measure. Assume that $\Omega$ tiles $\Bbb R^n$ multiplicatively by a countable subset of $GL(n, \Bbb R)$ and multi-tiles $\Bbb R^n$ additively  by a lattice  with multiplicity $k$ ($k\in\Bbb N$). Then $\Omega$ is a Riesz wavelet set in $\Bbb R^n$, i.e, the system (\ref{classical wavelet family})  for $\varphi$ with $\hat \varphi=1_\Omega$ is a Riesz basis for $L^2(\Bbb R^n)$. } \end{Fuehr}

 The  focus of this paper is to extend the results of Theorems A and B  for  infinite locally compact topological abelian (LCA) groups $(G,+)$ with identity $e$ and equipped with a Haar measure.  
 Note that some  LCA groups, like $p$-adic additive groups $\Bbb Q_p$, the field of $p$-adic rational numbers,  which do not posses any lattice. 
Therefore, the classical definition of wavelet sets does not apply to  such groups.  It is also known   that   the   finite abelian group do not posses any wavelet set in the traditional sense (\cite{ILM16}). Therefore, we exclude such  cases here and assume that $G$ is infinite and 
   admits a lattice, i.e., a discrete  subgroup which is co-compact.   For the notion of wavelet sets in $\Bbb Q_p$ and in a non-commutative setting we refer  to 
 \cite{Benedetto-Leon}  and 
  \cite{CM1}, respectively. 
  
  For a characterization of Riesz wavelets  generated by  MRA  see  e.g.,   \cite{Bownik-RieszWave}  and  \cite{HanJan07}.   
  
The purpose of this paper is to establish the necessary formalism to state and prove Theorem A and Theorem B in the setting of locally compact abelian groups. We shall also prove the converse of Theorem B, not only in Euclidean space but also in the general context. 

In the classical setting of $\Bbb R^n$, the definition of a wavelet is associated to two type of   unitary operators: dilations and translations.  The translation operator for the functions on an LCA  group can be defined by the group action analogous with the Euclidean setting. The dilation of a function defined on   $\Bbb R^n$ is given by the action of an invertible matrix $A$ and the associated normalization factor $c=|\det A|^{n/2}$. Therefore the immediate   feeling is that the dilation operator acting on the  functions on a LCA group  must be defined  by using group automorphisms. 
The  only obstacle here would be the normalization factor. Obviously, for discrete groups, the normalization factor is $c=1$ since the Haar measure is the counting measure.  Therefore we shall assume that our  setting is not discrete.     In this case we overcome the problem by using  { modular function} $\Delta$, which is  a  positive  homomorphism     defined on the class of automorphisms of $G$. We will define this function in Section \ref{Preliminaries}.

\section{Statement of main results} 

Borrowing the notion of wavelet collection of sets for the Euclidean setting in (\cite{BB2009}), we have the following definition.

 \begin{definition}\label{collection} Let $G=(G,+)$ be an LCA group with identity $e$ and    the dual group $\hat G$. We equip $G$ by    a Haar measure. 
A collection of measurable subsets $\Omega_i$, $1\leq i\leq m$, of $\hat G$  is called a (Riesz) wavelet  collection of sets for $G$  if there are  countable subset   $A$ of    $\mathcal Aut(G)$ and  countable subsets 
   $\Lambda_i\subset G$  ($1\leq i\leq m$) such that   
$$\cup_{i=1}^m\{\psi_{i,\lambda,a}: \ 1\leq i\leq m, \lambda\in \Lambda_i, a\in A\}$$
is  (Riesz) orthogonal basis for $L^2(G)$, where $\psi_{i,\lambda,a}(x)= \Delta(a)^{1/2} \psi_i(a(x)-\lambda)$ and $\hat\psi_i= 1_{\Omega_i}$. When $m=1$, we may say  $\Omega=\Omega_1$ is  a (Riesz) wavelet set. 
\end{definition}

The study of wavelet
sets  is  an interplay between group theory, geometry, operator theory
and analysis. 
The following result is an extension form of  Theorem  A and  Theorem 2.2 in \cite{BB2009}.  In the sequel, we assume that $\Omega_1, \cdots, \Omega_m$ are $m$ mutual  disjoint and measurable subsets of $\hat G$ with finite and non-zero  Haar measure, and $\Omega:=\cup_{i=1}^m \Omega_i$. 
 \\
 
 \begin{theorem}\label{wavelet set-necessary and sufficient condition}
  Let $\Omega_1, \cdots, \Omega_m$ be  given.  Let 
  $A\subset \mathcal Aut(\hat G)$ and $ \Lambda_i\subset G$, $1\leq i\leq m$, be   countable and non-empty subsets.  Assume that for every $1\leq i\leq m$, the   pair  $(\Omega_i, \Lambda_i)$ is  spectral and  the set $\Omega$ tiles $\hat G$ multiplicatively by the automorphisms $A$.    Then $\{\Omega_1, \cdots, \Omega_m\}$  is a wavelet collection of sets for $L^2(G)$ with respect to the dilations   $A$ and translations $\Lambda_i$. The converse also holds, provided that $e\in\cap_{i=1}^m  \Lambda_i$. 
 \end{theorem}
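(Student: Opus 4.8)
The plan is to transport the entire problem to the dual group $\hat G$ via the Plancherel transform $\mathcal F : L^2(G) \to L^2(\hat G)$, which is unitary, and to exploit the fact that under $\mathcal F$ the two defining operations of the wavelet system become elementary. First I would record the intertwining formulas: translation by $\lambda \in G$ becomes modulation by the character $\langle \lambda, \cdot\rangle$, so that $\widehat{\psi_i(\cdot-\lambda)}(\xi) = \overline{\langle\lambda,\xi\rangle}\,1_{\Omega_i}(\xi)$; and the dilation $f \mapsto \Delta(a)^{1/2} f(a(\cdot))$ becomes, through the modular function and the dual automorphism $\widehat{a^{-1}}\in\mathcal{Aut}(\hat G)$, a unitary dilation on $L^2(\hat G)$ that carries a function supported on $\Omega_i$ to one supported on $a(\Omega_i)$ (reading $A\subset\mathcal{Aut}(\hat G)$ as in the statement). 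Combining the two, $\mathcal F\psi_{i,\lambda,a}$ equals, up to the factor $\Delta(a)^{-1/2}$, the modulated indicator $\overline{\langle\lambda,\widehat{a^{-1}}\xi\rangle}\,1_{a(\Omega_i)}(\xi)$; in particular it is supported exactly on $a(\Omega_i)$.

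Second, I would use the multiplicative tiling hypothesis. Since $\{a(\Omega)\}_{a\in A}$ partitions $\hat G$ up to a null set and each $a$ is a bijection respecting the disjoint decomposition $\Omega=\bigsqcup_i\Omega_i$, the sets $\{a(\Omega_i)\}_{1\le i\le m,\,a\in A}$ are pairwise disjoint and cover $\hat G$ up to measure zero. This yields the orthogonal Hilbert-space decomposition $L^2(\hat G)=\bigoplus_{i,a}L^2(a(\Omega_i))$, and the computation above shows that $\mathcal F\psi_{i,\lambda,a}$ lies entirely in the summand $L^2(a(\Omega_i))$. On a fixed summand I would introduce the change-of-variables unitary $U_a:L^2(a(\Omega_i))\to L^2(\Omega_i)$ induced by $\widehat{a^{-1}}$ and the modular factor; by the intertwining formulas $U_a$ sends $\mathcal F\psi_{i,\lambda,a}$ to the character $\langle\lambda,\cdot\rangle$ restricted to $\Omega_i$. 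Since $(\Omega_i,\Lambda_i)$ is spectral, $\{\langle\lambda,\cdot\rangle:\lambda\in\Lambda_i\}$ is an orthogonal basis of $L^2(\Omega_i)$, so its $U_a$-preimage is an orthogonal basis of $L^2(a(\Omega_i))$. Assembling an orthogonal basis on each orthogonal summand gives an orthogonal basis of $L^2(\hat G)$, and applying $\mathcal F^{-1}$ proves that the wavelet system is an orthogonal basis of $L^2(G)$. This is the sufficiency direction.

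For the converse I would reverse this analysis, now invoking the extra hypothesis $e\in\bigcap_i\Lambda_i$. Packing (pairwise disjointness of the $a(\Omega_i)$) follows from orthogonality alone: taking $\lambda=\lambda'=e$, the elements $\mathcal F\psi_{i,e,a}=\Delta(a)^{-1/2}1_{a(\Omega_i)}$ belong to the system, and their mutual orthogonality forces $|a(\Omega_i)\cap a'(\Omega_{i'})|=0$ whenever $(i,a)\ne(i',a')$ — this is precisely where $e\in\Lambda_i$ enters. Covering follows from completeness: if $\hat G\setminus\bigcup_{i,a}a(\Omega_i)$ had positive measure, any nonzero $L^2$ function supported there would be orthogonal to the whole (support-localized) system, contradicting totality. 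Hence $\{a(\Omega)\}_a$ tiles $\hat G$, i.e. $\Omega$ tiles multiplicatively by $A$. With this tiling we again have $L^2(\hat G)=\bigoplus_{i,a}L^2(a(\Omega_i))$, each basis element sits in exactly one summand, so the elements indexed by a fixed $(i,a)$ form an orthogonal basis of $L^2(a(\Omega_i))$; transporting back through $U_a$ shows $\{\langle\lambda,\cdot\rangle:\lambda\in\Lambda_i\}$ is an orthogonal basis of $L^2(\Omega_i)$, i.e. each $(\Omega_i,\Lambda_i)$ is spectral.

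I expect the main obstacle to be the normalization bookkeeping of the first paragraph rather than anything conceptual. One must verify that the modular function $\Delta$ attached to $a\in\mathcal{Aut}(G)$ correctly records the scaling of Haar measure under the dual automorphism on $\hat G$, so that the frequency-side dilation and the change-of-variables unitaries $U_a$ carry exactly the factor $\Delta(a)^{\pm1/2}$ and turn the transformed wavelet atoms precisely into the character system on $L^2(\Omega_i)$ with no residual constant. Once these Plancherel and duality normalizations are pinned down, both the orthogonal-decomposition argument and its converse are essentially formal.
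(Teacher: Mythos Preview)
Your proposal is correct and follows essentially the same route as the paper: both transfer to $L^2(\hat G)$ via Plancherel, compute $\widehat{\psi_{i,\lambda,a}}$ as a modulated indicator on $a(\Omega_i)$, use the multiplicative tiling to obtain the orthogonal decomposition $L^2(\hat G)=\bigoplus_{i,a}L^2(a(\Omega_i))$, and invoke the spectral hypothesis via a change-of-variables unitary (your $U_a$ is the paper's operator $T_\alpha$ in Lemma~\ref{spectral-pair}); for the converse both use $\lambda=e$ to extract disjointness of the $a(\Omega_i)$ from orthogonality of the indicators, a support argument for covering, and then read off spectrality on each summand. The normalization issue you flag is exactly what the paper isolates in Lemma~\ref{Fourier on wavelet}.
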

 
 Theorem \ref{wavelet set-necessary and sufficient condition}  along with a result of Fuglede \cite{Fug74}  yields the following corollary, where  the spectral condition  has been  replaced by additive tiling property.
 
 \begin{corollary}   The set  $\{\Omega_1, \cdots, \Omega_m\}$  is a wavelet collection of sets for $L^2(G)$ with respect to the  translations by lattices $\Lambda_i$ ($1\leq i\leq m$) 
   if and only if   $\Omega:=\cup_{i=1}^m \Omega_i$ tiles $\hat G$ multiplicatively   and each $\Omega_i$ tiles $\hat G$ additivity with  $\Lambda_i^\perp$. 
 \end{corollary}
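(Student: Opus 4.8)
The plan is to read the corollary off Theorem~\ref{wavelet set-necessary and sufficient condition} after trading the spectrality hypothesis for an additive tiling hypothesis by means of Fuglede's criterion. Theorem~\ref{wavelet set-necessary and sufficient condition} already establishes the equivalence between $\{\Omega_1,\dots,\Omega_m\}$ being a wavelet collection (for translations $\Lambda_i$ and some dilation set $A$) and the conjunction of two conditions: that $\Omega=\cup_{i=1}^m\Omega_i$ tiles $\hat G$ multiplicatively by $A$, and that every pair $(\Omega_i,\Lambda_i)$ is spectral. The only asymmetry is that the converse in Theorem~\ref{wavelet set-necessary and sufficient condition} requires $e\in\cap_{i=1}^m\Lambda_i$. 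The first point I would make is that, under the present hypothesis, this requirement is automatic: each $\Lambda_i$ is a lattice, hence a subgroup of $G$, and therefore contains the identity $e$, so that $e\in\cap_{i=1}^m\Lambda_i$ and the full biconditional of Theorem~\ref{wavelet set-necessary and sufficient condition} is at our disposal.

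It then suffices to rewrite the spectrality condition. For this I would invoke the lattice version of Fuglede's theorem (\cite{Fug74}) in the LCA setting: if $\Lambda_i\subset G$ is a lattice and $\Lambda_i^\perp=\{\gamma\in\hat G:\ \langle\lambda,\gamma\rangle=1\ \forall\,\lambda\in\Lambda_i\}$ denotes its annihilator in $\hat G$, then the exponential system $\mathcal E_{\Lambda_i}$ is an orthogonal basis for $L^2(\Omega_i)$ if and only if $\Omega_i$ tiles $\hat G$ additively by $\Lambda_i^\perp$. Granting this equivalence for each $i$, the two conditions appearing in Theorem~\ref{wavelet set-necessary and sufficient condition} become verbatim the two conditions in the corollary. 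Thus, in the ``if'' direction I would start from multiplicative tiling of $\Omega$ together with additive tiling of each $\Omega_i$ by $\Lambda_i^\perp$, promote the latter to spectrality of each $(\Omega_i,\Lambda_i)$ via Fuglede, and then apply the sufficiency half of Theorem~\ref{wavelet set-necessary and sufficient condition}. In the ``only if'' direction I would reverse the chain: the converse half of Theorem~\ref{wavelet set-necessary and sufficient condition} (now unconditionally available, since $e\in\cap_i\Lambda_i$) yields multiplicative tiling of $\Omega$ and spectrality of each $(\Omega_i,\Lambda_i)$, and Fuglede converts each spectral pair back into an additive tiling by $\Lambda_i^\perp$.

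The main obstacle is not the logical assembly, which is immediate, but securing the correct form of Fuglede's lattice equivalence over a general LCA group admitting a lattice, since the introduction only records it for $\Bbb R^n$. Here one must check the Pontryagin-duality facts that make the statement meaningful: that $\Lambda_i^\perp$ is again a discrete and co-compact subgroup (a lattice) of $\hat G$, that the restricted characters $\{\langle\lambda,\cdot\rangle|_{\Omega_i}\}_{\lambda\in\Lambda_i}$ are precisely the characters of the compact quotient $\hat G/\Lambda_i^\perp$, and that, with Haar measures normalized compatibly, $\mathcal E_{\Lambda_i}$ being an orthogonal basis of $L^2(\Omega_i)$ is equivalent to the $\Lambda_i^\perp$-translates of $\Omega_i$ partitioning $\hat G$ up to null sets. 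This last equivalence is the familiar fundamental-domain/Parseval computation on the compact group $\hat G/\Lambda_i^\perp$, and once it is in place the corollary follows.
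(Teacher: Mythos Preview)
Your proposal is correct and follows exactly the route the paper indicates: the corollary is stated immediately after Theorem~\ref{wavelet set-necessary and sufficient condition} with the one-line justification that it ``along with a result of Fuglede \cite{Fug74}'' yields the claim, and your argument is precisely that combination. Your observation that $e\in\cap_i\Lambda_i$ is automatic because each $\Lambda_i$ is a subgroup is the key point that makes the biconditional available, and you make it explicit where the paper leaves it implicit.
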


 %
 
 The following result is an extension form of Theorem 
 \ref{wavelet set-necessary and sufficient condition} to the Riesz bases. 
 \begin{theorem}\label{Riesz wavelet set}   
 Let $\Omega_1, \cdots, \Omega_m$  be as above and countable sets $\Lambda_i \subset \hat G$  be given such that   for any $1\leq i\leq m$   the pairs $(\Omega_i, \Lambda_i)$ are Riesz spectral with  Riesz constants $L_i$ and $U_i$.  Moreover, assume that $\Omega$  is a multiplicative tiling for $\hat G$ with respect to 
the automorphisms $A\subset \mathcal Aut(\hat G)$.  
 Then $\Omega_1, \cdots, \Omega_m$ is a 
 Riesz wavelet collection of sets for $L^2(G)$, i.e.,  
   the family  
\begin{align}\label{multi-wavelet set}
\cup_{i=1}^m \{\psi_{i, \lambda, a} : \lambda \in \Lambda_i,  \ a\in A\} 
\end{align}
 is a Riesz basis for $L^2(G)$ where   $\hat\psi_i:= 1_{\Omega_i}$. The   Riesz constants in this case are given by   $L:=\min\{L_i\}$ and $U:=\max\{U_i\}$. 
 

\end{theorem}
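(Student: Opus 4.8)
The plan is to transfer the whole problem to the frequency side through the Plancherel isomorphism, where the multiplicative tiling hypothesis forces the Fourier supports of the wavelet generators to partition $\hat G$; this splits $L^2(\hat G)$ into an orthogonal direct sum of blocks on each of which the system is, after a dilation, nothing but one of the given exponential Riesz bases. First I would record the frequency-side formula for the generators. Since the Fourier transform is a unitary map $L^2(G)\to L^2(\hat G)$, it is enough to prove that $\{\widehat{\psi_{i,\lambda,a}}\}$ is a Riesz basis of $L^2(\hat G)$ with the asserted bounds. Writing the dilation operator $D_a g=\Delta(a)^{1/2}\,g\circ a$ and the translation $T_\lambda$, one checks $\psi_{i,\lambda,a}=D_a T_\lambda\psi_i$, and a change of variables together with $\widehat{\psi_i}=1_{\Omega_i}$ gives
\[
\widehat{\psi_{i,\lambda,a}}(\xi)=\Delta(a)^{-1/2}\,\overline{(a^{-1}\xi)(\lambda)}\,1_{\Omega_i}(a^{-1}\xi),
\]
where $a$ is read as the frequency-side automorphism in $\mathcal{Aut}(\hat G)$. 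In particular $\widehat{\psi_{i,\lambda,a}}$ is supported, independently of $\lambda$, on $a(\Omega_i)$.

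Next I would use the two hypotheses to produce a measurable partition of $\hat G$. Because the $\Omega_i$ are mutually disjoint and $\Omega=\bigcup_i\Omega_i$ tiles $\hat G$ multiplicatively by $A$, the family of supports $\{a(\Omega_i): 1\le i\le m,\ a\in A\}$ is, up to a null set, a partition of $\hat G$: the tiling gives $\sum_{a}1_{a(\Omega)}=1$ a.e., and disjointness of the $\Omega_i$ refines each $a(\Omega)$ into the $a(\Omega_i)$. Consequently $L^2(\hat G)=\bigoplus_{i,a}L^2\big(a(\Omega_i)\big)$ as an orthogonal direct sum, so every finitely supported combination of the $\widehat{\psi_{i,\lambda,a}}$ splits by Pythagoras into mutually orthogonal blocks indexed by the pairs $(i,a)$.

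It then remains to analyze a single block. Define $V_aF(\xi)=\Delta(a)^{-1/2}F(a^{-1}\xi)$; the modular function $\Delta$ is designed precisely so that $V_a$ is a unitary of $L^2(\hat G)$, and it carries $L^2(\Omega_i)$ onto $L^2\big(a(\Omega_i)\big)$. By the formula above, $V_a$ sends $e_\lambda\,1_{\Omega_i}$ to $\widehat{\psi_{i,\lambda,a}}$, where $e_\lambda(\eta):=\overline{\eta(\lambda)}$. Since $(\Omega_i,\Lambda_i)$ is Riesz spectral with constants $L_i,U_i$, the exponentials $\{e_\lambda\}_{\lambda\in\Lambda_i}$ form a Riesz basis of $L^2(\Omega_i)$ with those bounds, and a unitary preserves Riesz bounds; hence $\{\widehat{\psi_{i,\lambda,a}}\}_{\lambda\in\Lambda_i}$ is a Riesz basis of the block $L^2\big(a(\Omega_i)\big)$ with the same constants $L_i,U_i$. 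Summing the per-block two-sided estimates over the partition and using block orthogonality yields, for every finitely supported array $(c_{i,\lambda,a})$, the inequality $L\sum|c_{i,\lambda,a}|^2\le\big\|\sum c_{i,\lambda,a}\widehat{\psi_{i,\lambda,a}}\big\|^2\le U\sum|c_{i,\lambda,a}|^2$ with $L=\min_i L_i$ and $U=\max_i U_i$. Completeness follows because each $f\in L^2(\hat G)$ is the orthogonal sum of its block projections, each of which lies in the closed span of its block; pulling back through the Fourier transform gives the claimed Riesz basis for $L^2(G)$ with constants $L$ and $U$.

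I expect the only genuine subtlety to lie in the frequency-side bookkeeping: verifying that the modular normalization $\Delta(a)$ is exactly the factor making $V_a$ unitary (equivalently that $\Delta$ is multiplicative and transforms correctly under the duality $G\leftrightarrow\hat G$), and fixing the direction of the automorphism so that the tiling of $\hat G$ stated in terms of $A$ coincides with the partition by the supports $a(\Omega_i)$. Once these normalization and indexing points are settled in the preliminaries, the orthogonal-block structure makes the Riesz estimates, and the identification of the sharp constants $L=\min_i L_i$ and $U=\max_i U_i$, essentially automatic.
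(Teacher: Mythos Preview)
Your proposal is correct and follows essentially the same route as the paper: pass to the frequency side via Plancherel, use the multiplicative tiling and disjointness of the $\Omega_i$ to obtain the orthogonal decomposition $L^2(\hat G)=\bigoplus_{i,a}L^2(a(\Omega_i))$, transport the Riesz basis on each $L^2(\Omega_i)$ to $L^2(a(\Omega_i))$ via the unitary frequency-side dilation, and then sum the block estimates. The only cosmetic difference is that the paper packages the unitary-transport step as a separate lemma (the Riesz analogue of the spectral-pair lemma) rather than writing out the operator $V_a$ explicitly as you do.
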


A combination of  Theorem \ref{Riesz wavelet set} and     \cite[Theorem 4.1]{Agoraa-Antezanaa-Cabrelli}  yields  the following  result as an extension version of  Theorem B  for $m\geq 1$.

\begin{corollary}\label{Riesz wavelet set and tiling}    Let $\Omega_1, \cdots, \Omega_m$  be as above.
 Assume that  each 
  $\Omega_i$  multi-tiles $\hat G$ additively by some lattice  $\Gamma_i \leq \hat G$  with multiplicity $k_i \in \Bbb N$, $k_i>1$. Furthermore,  assume that $\Omega=\dot\cup_{i=1}^m \Omega_i$   tiles  $\hat G$ multiplicatively with respect to 
a subset  $A$ of  $\mathcal Aut(\hat G)$. Then  $\{\Omega_1, \cdots, \Omega_m\}$ is a 
Riesz wavelet collection of sets for $G$. 
\end{corollary}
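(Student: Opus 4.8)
The plan is to obtain Corollary \ref{Riesz wavelet set and tiling} by composing two already-available results: the multi-tiling-implies-Riesz-basis theorem \cite[Theorem 4.1]{Agoraa-Antezanaa-Cabrelli} and Theorem \ref{Riesz wavelet set}. No new hard analysis should be required beyond checking that the hypotheses of the former produce exactly the input demanded by the latter. The one substantive step is to manufacture, for each index $i$, a Riesz spectral pair $(\Omega_i,\Lambda_i)$ out of the additive multi-tiling hypothesis; once these pairs are in hand, Theorem \ref{Riesz wavelet set} supplies the conclusion directly.

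First I would apply the Agora--Antezana--Cabrelli theorem to each $\Omega_i \subset \hat G$ separately. By assumption $\Omega_i$ multi-tiles $\hat G$ additively by the lattice $\Gamma_i \le \hat G$ with multiplicity $k_i$, where $\Gamma_i$ is a discrete co-compact subgroup. Their Theorem 4.1 then yields a countable set $\Lambda_i$ (concretely assembled from $k_i$ cosets of the annihilator $\Gamma_i^\perp$) for which the character system $\mathcal E_{\Lambda_i}$ is a Riesz basis of $L^2(\Omega_i)$ with finite, strictly positive Riesz bounds $L_i$ and $U_i$. In the terminology of Theorem \ref{Riesz wavelet set}, this is precisely the assertion that each $(\Omega_i,\Lambda_i)$ is \emph{Riesz spectral} with constants $L_i,U_i$. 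The restriction $k_i>1$ merely places us in the genuinely non-orthonormal regime; the borderline value $k_i=1$ would reduce to the orthonormal setting governed by Theorem \ref{wavelet set-necessary and sufficient condition}.

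With the pairs $(\Omega_i,\Lambda_i)$ so produced, every hypothesis of Theorem \ref{Riesz wavelet set} is now satisfied: the $\Omega_i$ are mutually disjoint with finite positive measure, each $(\Omega_i,\Lambda_i)$ is Riesz spectral, and $\Omega=\dot\cup_{i=1}^m \Omega_i$ tiles $\hat G$ multiplicatively by $A\subset \mathcal Aut(\hat G)$. Invoking Theorem \ref{Riesz wavelet set} then gives that $\cup_{i=1}^m\{\psi_{i,\lambda,a} : \lambda\in\Lambda_i,\ a\in A\}$ is a Riesz basis for $L^2(G)$ with global constants $L=\min_i L_i$ and $U=\max_i U_i$, so that $\{\Omega_1,\dots,\Omega_m\}$ is a Riesz wavelet collection of sets for $G$, as claimed.

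The step deserving the most care, and the only genuine obstacle, is the duality bookkeeping. One must confirm that the translation index set $\Lambda_i$ delivered by \cite[Theorem 4.1]{Agoraa-Antezanaa-Cabrelli} for the domain $\Omega_i\subset\hat G$ is indexed by elements of $G=\hat{\hat G}$, so that the functions $\psi_{i,\lambda,a}$ of Definition \ref{collection} are well defined, and that the Riesz-basis statement transfers cleanly to the present LCA formulation (the cited result being stated for the group in which the multi-tiling occurs). Provided the Pontryagin correspondence $\Gamma_i \leftrightarrow \Gamma_i^\perp$ places $\Lambda_i$ inside $G$ as required, the two theorems compose with no further work.
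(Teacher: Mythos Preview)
Your proposal is correct and matches the paper's own proof essentially line for line: the paper applies \cite[Theorem 4.1]{Agoraa-Antezanaa-Cabrelli} to each $\Omega_i$ to obtain a countable $\Lambda_i\subset G$ with $(\Omega_i,\Lambda_i)$ Riesz spectral, and then invokes Theorem~\ref{Riesz wavelet set}. Your additional remarks on the duality bookkeeping and the role of $k_i>1$ are accurate elaborations, but the argument itself is identical.
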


An inverse theorem for the results in Theorem \ref{Riesz wavelet set}   follows. 
 
  \begin{theorem}\label{RB implies disjointness} 
Assume that $\Omega$ is a Riesz wavelet set with respect to the translations by $\Lambda$ and dilations by  $A$.  Then $\Omega$ is a multiplicative tiling for $\hat G$ with respect to the automorphisms $A$ if and only if    $(\hat\alpha(\Omega), \alpha^{-1}(-\Lambda))$  is a Riesz  spectral pair for all $\alpha\in A$. 
\end{theorem}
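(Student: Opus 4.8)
The plan is to transport the entire wavelet system to the Fourier side and read the statement off the block structure induced by the automorphisms in $A$. Write $D_\alpha f=\Delta(\alpha)^{1/2}f(\alpha(\cdot))$ and $T_\lambda f=f(\cdot-\lambda)$, so that $\psi_{\lambda,\alpha}=D_\alpha T_\lambda\psi$ with $\hat\psi=1_\Omega$. Applying the Plancherel transform $\mathcal F$ (a unitary of $L^2(G)$ onto $L^2(\hat G)$) and using the duality $\langle\alpha(x),\xi\rangle=\langle x,\hat\alpha(\xi)\rangle$ together with the definition of $\Delta$, a direct computation yields
\[
\mathcal F\psi_{\lambda,\alpha}(\xi)=\Delta(\alpha)^{-1/2}\,\overline{\langle\alpha^{-1}(\lambda),\xi\rangle}\;1_{\hat\alpha(\Omega)}(\xi),
\]
so that, up to the fixed positive constant $\Delta(\alpha)^{-1/2}$, the image of the $\alpha$-block is exactly the family of characters $\{e_{\alpha^{-1}(-\lambda)}\}_{\lambda\in\Lambda}$ restricted to $\hat\alpha(\Omega)$. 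Since $\mathcal F$ is unitary and a uniform positive rescaling of the elements of a block leaves the Riesz-basis property unchanged (only rescaling its bounds), for each fixed $\alpha$ the assertion ``the $\alpha$-block is a Riesz basis for $L^2(\hat\alpha(\Omega))$'' is equivalent to ``$(\hat\alpha(\Omega),\alpha^{-1}(-\Lambda))$ is a Riesz spectral pair.'' This reduces the theorem to understanding when the given Riesz basis does or does not respect the family of supports $\{\hat\alpha(\Omega)\}_{\alpha\in A}$.

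For the forward implication, assume $\Omega$ multiplicatively tiles $\hat G$ by $A$, i.e.\ $\sum_{\alpha\in A}1_{\hat\alpha(\Omega)}=1$ a.e. Then $L^2(\hat G)=\bigoplus_{\alpha\in A}L^2(\hat\alpha(\Omega))$ is an orthogonal decomposition, and each $\alpha$-block lies in the single summand $L^2(\hat\alpha(\Omega))$. I would then invoke the elementary fact that a Riesz basis subordinate to an orthogonal decomposition splits into Riesz bases of the summands: testing the global two-sided Riesz inequalities on coefficient sequences supported in one block gives Riesz bounds for that block, while completeness of the block inside its summand follows from orthogonality together with completeness of the whole system. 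Hence each block is a Riesz basis for $L^2(\hat\alpha(\Omega))$, which by the first paragraph is precisely the claimed Riesz spectral property.

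For the converse, assume every $(\hat\alpha(\Omega),\alpha^{-1}(-\Lambda))$ is a Riesz spectral pair and that $\Omega$ is a Riesz wavelet set; I must deduce $\sum_\alpha 1_{\hat\alpha(\Omega)}=1$ a.e. Covering is immediate: if $E=\hat G\setminus\bigcup_\alpha\hat\alpha(\Omega)$ had positive measure, any nonzero $g\in L^2(E)$ would be orthogonal to every $\mathcal F\psi_{\lambda,\alpha}$, contradicting completeness of the Riesz basis. The remaining possibility to exclude is positive-measure overlap: suppose $S=\hat\alpha(\Omega)\cap\hat\beta(\Omega)$ has positive measure for some $\alpha\neq\beta$. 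Choosing $0\neq g\in L^2(S)$ and noting that $L^2(S)$ sits inside both $L^2(\hat\alpha(\Omega))$ and $L^2(\hat\beta(\Omega))$, I expand $g$ in the $\alpha$-block and in the $\beta$-block, each with uniquely determined, nonzero $\ell^2$ coefficients. Because the index sets of the two blocks are disjoint, the difference of these two representations is a nontrivial $\ell^2$-combination of the full system equal to $0$, contradicting the $\ell^2$-independence forced by the lower Riesz bound. Thus the sets $\hat\alpha(\Omega)$ are pairwise disjoint up to null sets, and with covering this is the multiplicative tiling.

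The main obstacle is the converse, and inside it the exclusion of positive-measure overlaps: non-covering contradicts completeness almost for free, but overlap must be ruled out through the $\ell^2$-linear (equivalently $\omega$-) independence enjoyed by a Riesz basis, which is exactly the place where the standing hypothesis that the whole family is a Riesz basis is indispensable. A secondary but unavoidable technical point is the Fourier-side bookkeeping in the non-discrete LCA setting, where the normalization by $\Delta(\alpha)^{1/2}$ must be tracked carefully so that the dilated generators genuinely become characters on the rescaled supports $\hat\alpha(\Omega)$.
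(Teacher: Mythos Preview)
Your proof is correct and follows essentially the same route as the paper: transport to the Fourier side via Lemma~\ref{Fourier on wavelet}, then use the orthogonal block decomposition $L^2(\hat G)=\bigoplus_\alpha L^2(\hat\alpha(\Omega))$ for the direction ``tiling $\Rightarrow$ Riesz spectral pairs,'' and for the other direction rule out overlaps by the $w$-linear independence of Riesz bases (Lemma~\ref{Riesz basis is a frame}) and rule out non-covering by completeness. The only cosmetic difference is that the paper compares each single-block expansion of $1_M$ against the full-system expansion (forcing all off-block coefficients to vanish twice over), whereas you subtract the two single-block expansions directly; both produce the same nontrivial $\ell^2$-null combination.
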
  
  

{\bf Outline of the paper.}  The paper is organized as follows.  After Section \ref{open problems} on some open problems, in 
  Section  \ref{Preliminaries} we recall necessary backgrounds  on the Fourier transform on locally compact abelian group $G$ and wavelet and Riesz bases in $L^2(G)$. 
 Theorem 
 \ref{wavelet set-necessary and sufficient condition} asserts that a collection of measurable  sets with finite and positive measure  in the dual space of a LCA group $G$  generate an orthogonal wavelet bases for $L^2(G)$ if and only if the sets are spectral and admit multiplicative tiling property, provided that  the intersection of all translation sets contains $e$. 
 In section \ref{Wavelet sets} we give the proof of Theorem 
 \ref{wavelet set-necessary and sufficient condition}.  In Section \ref{Reisz wavelet sets} we prove Theorems  \ref{Riesz wavelet set} and  \ref{RB implies disjointness}. These theorems are as an extension version of both necessary and sufficient part of  Theorem \ref{wavelet set-necessary and sufficient condition}. We conclude the paper by presenting some examples of wavelet and Riesz wavelet sets in Section \ref{examples}.

\section{Open problems}\label{open problems} 
%
 %
%
 In the example constructed by Wang \cite{Wang2002}, we observed that  the  wavelet set $[-1,-1/2]\cup [1/2,1]$  does    tile the space $\Bbb R$  multiplicatively on $\{\pm 2^n\}_{n\in \Bbb Z}$.   However, the set $\Omega$ has a multiplicative tiling  property with respect to $M$  with multiplicity $2$. In this relation we make the following definition.
 
 \vskip.125in 

\begin{definition} Let $\Omega\subset G$. We say $\Omega$  tiles $G$ multiplicatively  with multiplicity  $k$ if there is a countable subset $A$ of automorphisms of $G$ such that 
 $$\sum_{\alpha\in A} 1_{\alpha(\Omega)} (x) =k \quad a.e. \  x\in G .$$ 
The case $k=1$ is the traditional multiplicative tiling.  
 \end{definition} 
 
 \vskip.125in 
 
\begin{question}\label{multiplicative tiling at level k}
Suppose that $\Omega$ is a wavelet set in a locally compact abelian group $G$ with respect to a dilation set $A\subset \mathcal Aut(G)$ and translation set $\Lambda\subset G$, where $e\not\in \Lambda$. Must $\Omega$    tile $\hat G$ multiplicatively by some $A' \subset \mathcal Aut(G)$, $A\rq{}\neq A$? 
\end{question} 
Notice that, when $e\in \Lambda$, then according to Theorem \ref{wavelet set-necessary and sufficient condition}  the wavelet set is a multiplicative tiling and    $A\rq{}=A$. 
 A   version of  Question \ref{multiplicative tiling at level k} for a non-separable wavelet index is the following. 
 \begin{question}\label{non-separable} Let $\mathcal B:=\{ (\lambda, a): \lambda\in G, \ a\in  \mathcal Auto(G)\}$ be countable. Assume that  for $\psi$ with $\hat\psi=1_\Omega$, the set $\{\psi_{\lambda, a}: (\lambda, a)\in \mathcal B\}$ is an orthogonal basis for $L^2(G)$, i.e., $\Omega$ is a wavelet set with respect to $\mathcal B$. Assume that $(e,a)\in \mathcal B$ for some $a\in G$. 
Must $\Omega$ tile additively and multiplicatively?
 \end{question}

In relation to Corollary  \ref{Riesz wavelet set and tiling} we have the following question. 

\begin{question}  Suppose that 
  $\{\Omega_i\}_{i=1}^m$  is  a Riesz wavelet collection of  sets for $L^2(G)$, which is not orthogonal. Let $\Omega:=\cup_i\Omega$.  Must $\{\alpha(\Omega)\}_{a\in A}$ be a  multiplicative tiling  for $G$ with multiplicity $k\geq 1$? 
 \end{question}

 {\bf Acknowledgements.} The author would like to thank Alex Iosevich and Shahaf Nitzan for helpful conversations.
 Support for this project was partially provided by PSC-CUNY 
   by  PSC-CUNY Award B $\sharp$ 69625-00 47, jointly funded by The Professional Staff Congress and The City University of New York.
 
 \section{Notations and Preliminaries}\label{Preliminaries}
Following \cite{Rudin-Fourier on group}, 
 let $(G, +)$ denote a locally compact abelian group  with the group identity $e$. 
 We assume 
 $G$ is second countable and we equip $G$ with a  normalized Haar measure   $m_G$ which is non-zero and     translation invariant.   Let $\hat G$ denote the dual group of $G$ and $m_{\hat G}$ the Haar measure on $\hat G$.   For any element $\xi$ in the dual group  $\hat G$, we associate the  character $\chi_\xi: G\to \Bbb C$ of $G$ and  denote the duality by $\chi_\xi(x) := \langle \xi, x\rangle$, $x\in G$. When $G$ in $n$-dimensional Euclidean space $\Bbb R^n$, then $\widehat{\Bbb R^n}= \Bbb R^n$ as well and $\langle \xi, x\rangle = e^{2\pi i \xi \cdot x}$,  $\xi,  x\in \Bbb R^n$, is the canonical mapping. 
 
 By the properties of characters,  we have 
 \begin{align}\label{property of character}
 \chi_\xi(-x) = \chi_{-\xi}(x)= \overline{\chi_\xi(x)} . 
 \end{align}
 
 For any $p>0$, analogously to the Euclidean case, we define the space $L^p(G)$ with respect to the measure $m_G$ by 
 \begin{align} 
 L^p(G)= \{f: G\to \Bbb C, \ f \ m_G\text{-measurable and } \  \int_G |f(x)|^p dm_G(x) <\infty\}. 
 \end{align} 
    For any $f\in L^1(G)\cap L^2(G)$, let $\hat f$ denote the  {\it Fourier transform} of $f$ given by 
\begin{align}\label{group-fourier-transform}
\hat f(\xi) = \int_G f(x) \overline{\chi_\xi(x)} dm_G(x)\ , \ \  \xi\in \hat G
\end{align} 
By the Plancherel theorem the definition of the Fourier transform can be extend uniquely to the functions in $L^2(G)$ (See, for example, 1.6.1 in \cite{Rudin-Fourier on group}), so that for any $f, g\in L^2(G)$ the Parseval identity holds: 
 \begin{align}\label{Parseval identity} 
  \int _G f(x) \overline{g(x)} dm_G(x) = \int_{\hat G} \hat f(\xi) \overline{\hat g(\xi)} dm_{\hat G}(\xi)  ~.
  \end{align}
 We denote topological automorphisms of $G$ onto itself by   $ \mathcal Aut(G)$.    
 For $\alpha\in \mathcal Aut(G)$,  the  {\it adjoint homomorphism}  $\hat \alpha: \hat G\to \hat G$  is given as following: For any character $\chi_\xi$, $\hat \alpha(\chi_\xi)$ is a character and its duality is given by 
$$\hat \alpha(\chi_\xi)(x) := \chi_\xi(\alpha(x))= \langle \xi, \alpha(x)\rangle \quad x\in G.$$
 It is  easy to show that $\hat \alpha$ is an automorphism (one-to-one  homomorphism) of $\hat G$ onto $\hat G$ and  its inverse $\hat \alpha^{-1}$  acts  on $\hat G$ by 
$$\hat \alpha^{-1}(\chi_\xi)(x):= \chi_\xi(\alpha^{-1}(x))= \langle \xi, \alpha^{-1}(x)\rangle .$$ 
This implies that  $\hat \alpha^{-1} = \widehat{\alpha^{-1}}$. 
By the definition of the adjoint homomorphism, the following result  immediately follows. 
\begin{lemma}\label{property of adjoint}   
Given   any automorphism $\alpha: G\to G$,  $\xi\in \hat G$ and   $x\in G$  
$$\hat\alpha(\chi_\xi)(x) = \chi_{\hat\alpha(\xi)}(x)  \quad  {\text and} \quad \hat\alpha^{-1}(\chi_\xi)(x)=  \chi_{\hat\alpha^{-1}(\xi)}(x)$$

\end{lemma}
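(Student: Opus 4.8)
The plan is to read both identities as a direct unwinding of the definition of the adjoint homomorphism, the one genuine point being the canonical identification of the dual group $\hat G$ with the group of continuous characters of $G$, under which the element $\xi$ is named by the character $\chi_\xi$.

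First I would check that the assignment $x\mapsto \chi_\xi(\alpha(x))$ is itself a character of $G$. This is immediate: it is the composition $\chi_\xi\circ\alpha$ of the continuous homomorphism $\chi_\xi: G\to\mathbb T$ with the topological automorphism $\alpha: G\to G$, hence a continuous homomorphism of $G$ into the circle group $\mathbb T$. Therefore $\hat\alpha(\chi_\xi)=\chi_\xi\circ\alpha$ is a well-defined element of the character group, and under the identification above it corresponds to a unique element of $\hat G$; by the very definition of how $\hat\alpha$ acts on $\hat G$, that element is precisely what we denote $\hat\alpha(\xi)$. Thus $\chi_{\hat\alpha(\xi)}$ and $\hat\alpha(\chi_\xi)$ are two names for the same character of $G$, and evaluating at an arbitrary $x\in G$ gives
$$\hat\alpha(\chi_\xi)(x)=\chi_\xi(\alpha(x))=\chi_{\hat\alpha(\xi)}(x),$$
which is the first identity.

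For the second identity I would repeat the argument verbatim with $\alpha^{-1}$ in place of $\alpha$, invoking the relation $\hat\alpha^{-1}=\widehat{\alpha^{-1}}$ recorded just above the lemma. Since $\alpha^{-1}\in\mathcal Aut(G)$, the map $x\mapsto\chi_\xi(\alpha^{-1}(x))$ is again a character equal to $\hat\alpha^{-1}(\chi_\xi)$, the element of $\hat G$ it represents is by definition $\hat\alpha^{-1}(\xi)$, and evaluation at $x$ yields $\hat\alpha^{-1}(\chi_\xi)(x)=\chi_{\hat\alpha^{-1}(\xi)}(x)$.

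I do not expect any real obstacle: the statement is essentially definitional, as the author's own remark that it ``immediately follows'' indicates. The only thing that must be made explicit is that the two sides of each equation are two notations for the same object once $\hat G$ is identified with its character group, so the entire substance of the proof lies in pinning down that identification rather than in any computation.
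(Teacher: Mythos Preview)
Your proposal is correct and matches the paper's own treatment: the paper gives no explicit proof, stating only that the lemma ``immediately follows'' from the definition of the adjoint homomorphism, which is precisely the definitional unwinding you carry out. Your additional care in noting that $\chi_\xi\circ\alpha$ is a character and in invoking the identification $\xi\leftrightarrow\chi_\xi$ simply makes explicit what the paper leaves implicit.
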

A subgroup $\Lambda$ of $G$ is called a {\it lattice} if it is discrete and co-compact, i.e.,  the quotient group $G/\Lambda$ is compact. $G$ is second countable, therefore any lattice in $G$ is  countable (\cite[Section 12, Example 17]{Pontryagin}).
  The {\it annihilator}  or {\it dual lattice} of a lattice $\Lambda$ is given by 
  $$\Lambda^\perp=\{ \xi \in \hat G: \  \chi_\xi(\lambda)=1 \quad  \forall \lambda\in \Lambda\}.$$
  Then $\Lambda^\perp$ is also a lattice in $\hat G$. By the duality theorem \cite[Lemma 2.1.3]{Rudin-Fourier on group}, $\Lambda^\perp$ is topologically isomorphic to the dual of $G/\Lambda$, that is, $\Lambda^\perp \cong \widehat{(G/\Lambda)}$.

  Let  $\alpha: G\to G$ be an automorphism. Then 
  $$ \hat \alpha (\Lambda^\perp) = (\alpha^{-1}(\Lambda))^\perp .$$ 
 Given a countable set  $\Lambda$ in $G$ and a non-zero measurable set $\Omega\subset G$, we say $\Omega$ {\it tiles} $G$  additively by  $\Lambda$ with multiplicity  $k\in \Bbb N$ if 
 \begin{align} 
 \sum_{\lambda\in \Lambda} 1_\Omega(x-\lambda) = k \quad a.e. \ x\in G ~ . 
 \end{align} 
 If $k=1$, then we simply say $\Omega$ {\it tiles}    $G$  additively  by $\Lambda$.

  Given   $\Omega\subset \hat G$ with non-zero measure and a countable set   $T\subset G$, we say $(\Omega, T)$ is a {\it  spectral pair} if the countable  family  $\mathcal E_T:=\{ \chi_t: \ t\in T\}$
is an orthogonal basis for $L^2(\Omega)$. The pair is called  
  {\it Riesz spectral pair} if  $\mathcal E_T$ 
constitute a Riesz basis for $L^2(\Omega)$. 
The set $\Omega\subset G$ is   {\it multiplicative tiling  }   if there is a set of automorphisms  $\mathcal A$ of $G$ such that   

$$\sum_{\alpha\in \mathcal A} 1_{\alpha(\Omega)}(x) = 1 \quad a.e.  \ x\in \Bbb G.$$
 
 For general definition of multiplicative tiling sets in a measure space $(X, \mu)$ see \cite{Olafsson_wavelet sets}.

{\it Modular function.} Associated to a locally compact abelian group $G$ with  the  Haar measure $m_G$, the {\it modular function} $\Delta:  \mathcal Aut(G)\to (0,\infty)$ is a continuous homomorphism   such that for 
any measurable set $\Omega\subset G$ there holds 
\begin{align}\label{measure of a dilated set} 
m_G(\alpha(\Omega)) = \Delta(\alpha)m_G(\Omega).
\end{align}  
It is well-known that the  modular function $\Delta$ exists. For this, consult  \cite[26.21]{HwRoI}. (The existence of such map for a larger class of continuous group homomorphisms of $G$ onto $G$ has also been proved in \cite[Theorem 6.2]{Bownik-Ross}.) 
The relation (\ref{measure of a dilated set}) implies that 
for any  integrable function $f$ on $G$  with respect to the Haar measure $m_G$  the equation 
\begin{align}\label{modular-function}
\int_G f(\alpha(x)) dm_G(x) = \Delta(\alpha)^{-1} \int_G f(x) dm_G(x) 
\end{align}
holds  for all  automorphism $\alpha$ of $G$.  For our purpose, we  shall assume that $\Delta$ {\it is not}  identical to one. 

{\it Dilations and translations.} Let $G$ be a LCA group. 
There are two unitary operators associated to the definition of wavelets on the group: translation and dilation. For $x\in G$, the {\it translation operator} $\tau_x$ is defined on function $f: G\to \Bbb C$ and  given by 
   $\tau_x f(y) = f(y-x)$.  By the translation invariant property of Haar measure,   $\tau_x$ is unitary restricted to the functions in $L^2(G)$.  For a fixed  automorphism $\alpha\in \mathcal Aut(G)$, we define  $\delta_a$  on $f$ by 
    $$\delta_\alpha f(x) = \Delta(\alpha)^{1/2} f(\alpha(x)) , \ \forall \ x\in G .$$
    When $\Delta(\alpha)\neq 1$, we call $\delta_\alpha$ {\it dilation operator}. In the sequel we shall assume that the operator $\delta_\alpha$ is a dilation.  
It is easy to see that by    (\ref{modular-function})  the operator $\delta_\alpha$ is an unitary operator. 
  For a mapping $f: G\to \Bbb C$,  $\lambda\in \Lambda$ and $\alpha\in \mathcal Aut(G)$  we denote by $f_{\alpha,\lambda}$  the ``dilation and translation copy\rq\rq{} of $f$ with respect to $\alpha$ and $\lambda$, respectively, and define  it by 
 $$f_{\alpha,\lambda}(x):= \delta_\alpha \tau_\lambda f (x)= \Delta(\alpha)^{1/2} f(\alpha(x) -\lambda) . $$ 
 For $x\in G$ and $g: \hat G\to \Bbb C$,  the {\it modulation operator} $M_x$ on $g$ is   given by 
 $$M_xg(\xi) = \langle \xi, x\rangle g(\xi)  \quad \xi\in \hat G .$$ 
The following result is immediate by the group Fourier transform.

 \begin{lemma}\label{Fourier on wavelet}  Let $\alpha$ be an automorphism of $G$  and  $\lambda\in G$. Then for any $f\in L^1(G)\cap L^2(G)$
 $$    \widehat f_{\alpha,\lambda}(\xi) =  \widehat{\delta_\alpha \tau_\lambda   f }(\xi)= 
 \Delta(\alpha)^{-1/2}    (M_{-\lambda} 
        \hat f)(\hat \alpha^{-1}(\xi)),  \quad \xi\in \hat G .$$ 
 \end{lemma}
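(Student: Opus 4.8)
The plan is to compute $\widehat{f_{\alpha,\lambda}}(\xi)$ directly from the integral definition (\ref{group-fourier-transform}) and to simplify it through two successive changes of variable: one that undoes the dilation by $\alpha$ and one that undoes the translation by $\lambda$. Starting from
\[
\widehat{f_{\alpha,\lambda}}(\xi) = \Delta(\alpha)^{1/2}\int_G f(\alpha(x)-\lambda)\,\overline{\chi_\xi(x)}\,dm_G(x),
\]
I would first substitute $y=\alpha(x)$. Writing the integrand as a function of $x$ in the form $h(\alpha(x))$ with $h(y)=f(y-\lambda)\,\overline{\chi_\xi(\alpha^{-1}(y))}$, the change-of-variable identity (\ref{modular-function}) applies and contributes a factor $\Delta(\alpha)^{-1}$. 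Combined with the normalization $\Delta(\alpha)^{1/2}$ this already produces the prefactor $\Delta(\alpha)^{-1/2}$ appearing in the claim.

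The key structural step is the transformation of the character. After the substitution the character reads $\chi_\xi(\alpha^{-1}(y))$, and by the definition of the adjoint homomorphism together with Lemma \ref{property of adjoint} this equals $\chi_{\hat\alpha^{-1}(\xi)}(y)$; this is exactly the mechanism that converts a dilation of $f$ into an evaluation of $\hat f$ at $\hat\alpha^{-1}(\xi)$. At this stage the expression becomes $\Delta(\alpha)^{-1/2}\int_G f(y-\lambda)\,\overline{\chi_{\hat\alpha^{-1}(\xi)}(y)}\,dm_G(y)$.

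Next I would carry out the translation substitution $z=y-\lambda$, using the translation invariance of the Haar measure $m_G$. Since each character is a group homomorphism, $\chi_{\hat\alpha^{-1}(\xi)}(z+\lambda)$ factors as $\chi_{\hat\alpha^{-1}(\xi)}(z)\,\chi_{\hat\alpha^{-1}(\xi)}(\lambda)$, so the $\lambda$-dependent factor pulls out of the integral while the remaining integral is precisely $\hat f(\hat\alpha^{-1}(\xi))$. This yields
\[
\widehat{f_{\alpha,\lambda}}(\xi)=\Delta(\alpha)^{-1/2}\,\overline{\chi_{\hat\alpha^{-1}(\xi)}(\lambda)}\,\hat f(\hat\alpha^{-1}(\xi)).
\]

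Finally I would match this against the stated right-hand side. Unwinding the modulation operator gives $(M_{-\lambda}\hat f)(\hat\alpha^{-1}(\xi))=\langle \hat\alpha^{-1}(\xi),-\lambda\rangle\,\hat f(\hat\alpha^{-1}(\xi))=\chi_{\hat\alpha^{-1}(\xi)}(-\lambda)\,\hat f(\hat\alpha^{-1}(\xi))$, and the character property (\ref{property of character}) identifies $\chi_{\hat\alpha^{-1}(\xi)}(-\lambda)=\overline{\chi_{\hat\alpha^{-1}(\xi)}(\lambda)}$, which is exactly the conjugated factor obtained above. There is no genuine obstacle beyond careful bookkeeping; the one point requiring attention is the order of the two substitutions and the correct appearance of the adjoint $\hat\alpha^{-1}$ (rather than $\hat\alpha$) in the argument, which is forced by the relation $x=\alpha^{-1}(y)$ coming from $y=\alpha(x)$. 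The manipulations are justified for $f\in L^1(G)\cap L^2(G)$ since all the integrals converge absolutely.
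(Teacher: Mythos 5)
Your proof is correct and follows essentially the same route as the paper's: a change of variables using the modular-function identity (\ref{modular-function}) and translation invariance of $m_G$, conversion of $\chi_\xi\circ\alpha^{-1}$ into $\chi_{\hat\alpha^{-1}(\xi)}$ via Lemma \ref{property of adjoint}, factoring the character, and identifying the prefactor with $M_{-\lambda}$ through (\ref{property of character}). The only difference is cosmetic: you perform the dilation and translation substitutions in two separate steps, whereas the paper absorbs both into a single substitution $x\mapsto\alpha^{-1}(x+\lambda)$.
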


\begin{proof}  The result obtains from the application of the Fourier transform on the  dilation and translation operators, as follows.  Let $\xi\in \hat G$. Then by  (\ref{group-fourier-transform}), the following holds. 
\begin{align} \widehat f_{\alpha,\lambda}(\xi) &= \int_{G} f_{\alpha, \lambda}(x) \overline{\chi_\xi(x)} dm_G(x)\\\notag
  &= \Delta(\alpha)^{1/2} \int_{G}   f(\alpha(x)-\lambda) \overline{\chi_\xi(x)} dm_G(x)  \\\notag
 &= \Delta(\alpha)^{-1/2}\int_{G}  f(x) \overline{\chi_\xi(\alpha^{-1}(x+\lambda))}  dm_G(x)  \\\notag
  &= \Delta(\alpha)^{-1/2}  \overline{\chi_\xi(\alpha^{-1}(\lambda))} \int_{G}  f(x) \overline{\chi_\xi(\alpha^{-1}x)}  dm_G(x) \\\notag
   &= \Delta(\alpha)^{-1/2} \overline{\chi_\xi(\alpha^{-1}(\lambda))} \int_{G} f(x) \overline{\chi_{\hat\alpha^{-1}(\xi)} (x)}  dm_G(x) \\\notag
   &=  \Delta(\alpha)^{-1/2}  \overline{\chi_\xi(\alpha^{-1}(\lambda))} \hat f(\hat \alpha^{-1}(\xi))  \\\notag
      &=  \Delta(\alpha)^{-1/2}    (M_{-\lambda} 
        \hat f)(\hat \alpha^{-1}(\xi)). 
\end{align} 
This completes the proof. 
Note that within the above lines we used the property  
 $\chi_\xi(\alpha^{-1}(x)) =  \chi_{\hat\alpha^{-1}(\xi)}(x)$ proved   in  Lemma \ref{property of adjoint} and the equation $\overline{\chi_\xi(x)}= \chi_\xi(-x)$ by (\ref{property of character}).  
\end{proof}

 Assume that   $f\in L^2(G)$ with   $\hat f= 1_\Omega$.    As a result  of the previous lemma   
  one has 
 \begin{align}\label{Fourier-transform-of-wavelets}
   \widehat f_{\alpha,\lambda}(\xi) =    \Delta(\alpha)^{-1/2}     \chi_{\hat\alpha^{-1}(\xi)}(-\lambda) 1_{\hat\alpha(\Omega)}(\xi)  \quad \xi\in\hat G.
 \end{align} 
   {\it Convention.} In the sequel, we shall use the notation ``exponential\rq\rq{}  $e_x(\xi):= \chi_\xi(x)$ for   $x\in G$ and $\xi\in \hat G$.

{\it Riesz bases.} A  countable sequence $\{x_n\}_{n\in I}$ in a Hilbert space $\mathcal H$ is a Riesz basis for $\mathcal H$ if the sequence is of the form $\{Ue_n\}_{n\in I}$  for some orthonormal basis $\{e_n\}_{n\in I}$ for $\mathcal H$ and an invertible and bounded linear  operator $U: \mathcal H\to \mathcal H$. This definition is equivalent to say that the sequence $\{x_n\}_{n\in I}$ is dense in $\mathcal H$ and there is  finite and positive constants $A, B$ such that for any finite sequence $\{c_n\}_{n\in I}$ we have 
\begin{align}\label{RB-Ineq}
A\sum_{n\in I}|c_n|^2 \leq \left\| \sum_{n\in I} c_n x_n\right\|_{\mathcal H}^2 \leq B\sum_{n\in I} |c_n|^2 .
\end{align}
It is obvious that if the upper and lower estimations in  (\ref{RB-Ineq}) hold for any  finite sequence $\{c_n\}_{n\in I}$,  then they also hold for any sequence $\{c_n\}_{n\in I}\in \ell^2(I)$. For more about Riesz bases for any Hilbert space we refer to  \cite{Ole_book_2008}. 
 
  The following result is well-known, 
  but we give a proof below to keep the presentation as self-contained as possible. 
 
 \begin{lemma}\label{Riesz basis is onb} Let $\{x_n\}_{n\in I}$ be a Riesz basis for a Hilbert space $\mathcal H$. Assume that for any finite sequence $\{c_n\}$ the identity  holds:
 \begin{align}\label{Parseval-type identity}  
 \| \sum_n c_n x_n\| = (\sum_n |c_n|^2)^{1/2}
 \end{align}
 Then $\{x_n\}$ is an orthonormal basis. 
 \end{lemma}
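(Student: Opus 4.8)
The plan is to exploit the structural definition of a Riesz basis directly. First I would write $x_n = U e_n$, where $\{e_n\}_{n\in I}$ is an orthonormal basis for $\mathcal H$ and $U:\mathcal H\to\mathcal H$ is bounded and invertible, as guaranteed by the definition. For any finite scalar sequence $\{c_n\}$, Parseval's identity for the orthonormal basis $\{e_n\}$ gives $\|\sum_n c_n e_n\| = (\sum_n |c_n|^2)^{1/2}$, so the hypothesis (\ref{Parseval-type identity}) reads $\|U(\sum_n c_n e_n)\| = \|\sum_n c_n e_n\|$. Thus $U$ preserves norms on the subspace of finite linear combinations of the $e_n$.

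Next I would promote this to an isometry on all of $\mathcal H$. Since finite linear combinations of the $e_n$ are dense in $\mathcal H$ and $U$ is bounded, hence continuous, the identity $\|Uv\| = \|v\|$ extends from that dense subspace to every $v\in\mathcal H$. Therefore $U$ is a linear isometry. Because $U$ is also invertible, in particular surjective, it is a surjective isometry and hence unitary: from $\|Uv\|^2 = \|v\|^2$ for all $v$ we get $\langle (U^*U - I)v, v\rangle = 0$ for all $v$, so polarization forces $U^*U = I$, and invertibility then yields $U^* = U^{-1}$, i.e. $UU^* = I$ as well.

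Finally, a unitary operator carries any orthonormal basis to an orthonormal basis. Hence $\{x_n\} = \{U e_n\}$ is orthonormal, and since it already spans $\mathcal H$ densely (being a Riesz basis), it is an orthonormal basis, as claimed. An entirely elementary alternative avoids $U$: taking a single nonzero coefficient in (\ref{Parseval-type identity}) gives $\|x_j\| = 1$, and applying it to the two-term combination $x_j + t x_k$ with $j\neq k$ forces $2\,\mathrm{Re}\bigl(t\langle x_k, x_j\rangle\bigr)=0$ for every scalar $t$; choosing $t=1$ and $t=i$ then gives $\langle x_k, x_j\rangle = 0$.

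The only step requiring a moment's care, and the closest thing here to an obstacle, is the passage from the finite-sequence hypothesis to a statement valid on all of $\mathcal H$ (the isometry extension in the first approach). This is settled cleanly by the density of finite linear combinations together with the boundedness of $U$, so no genuine difficulty arises.
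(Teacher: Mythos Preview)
Your proof is correct. Your primary route---showing that the operator $U$ in $x_n = U e_n$ is an isometry on the dense subspace of finite combinations, hence unitary by invertibility---is a genuinely different argument from the paper's. The paper does exactly what you sketch as your ``elementary alternative'': it takes a single nonzero coefficient to get $\|x_n\|=1$, then applies the identity to $x_n + x_m$ (and to $ix_n + x_m$) to force the real and imaginary parts of $\langle x_n, x_m\rangle$ to vanish. Your operator-theoretic approach has the conceptual advantage of explaining \emph{why} the result holds (the Parseval-type hypothesis is precisely the statement that $U$ is norm-preserving, and a surjective isometry is unitary), at the cost of the small density/continuity extension you flagged; the paper's bare-hands computation is shorter and needs nothing beyond the parallelogram-style expansion. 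Since your alternative already coincides with the paper's proof, the two presentations together cover both viewpoints.
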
 
 
 \begin{proof} 
 We only need to show that for any $m, n\in I$, $\langle x_n, x_m\rangle = \delta_{m,n}$ 
The identity (\ref{Parseval-type identity}) implies that $\|x_M\| =1$. 
 For the orthogonality of the vectors we proceed as follows: By (\ref{Parseval-type identity}),  it is immediate that  $\|x_n +x_m\|^2= 2$ for any  $n\neq m$. From the other hand    
 $\|x_n + x_m\|^2= 2+ 2{\text Re} \langle x_n, x_m\rangle .$
 This implies that we must have ${\text Re} \langle x_n, x_m\rangle =0 $. By replacing  $x_n$ by $ix_n$, we also  obtain ${\text Im} \langle x_n, x_m\rangle = 0$. These imply that $x_n$ and $x_m$ are orthogonal, and we are done. 
 \end{proof}

\begin{definition} Let $H$ be a infinite-dimensional Hilbert spaces. An infinite collection $\{x_n\}$ of vectors in $H$ is $w$-linearly independent if for any sequence $\{c_n\}$ such that $\sum_n c_n x_n$ converges to zero in the norm of $H$  is identically zero. 
\end{definition} 
It is well-known that any $w$-linearly independent sequence  is linearly independent. However, the converse does  not always hold. 

\begin{lemma}\label{Riesz basis is a frame} A sequence $\{x_n\}$ in a Hilbert spaces is   a Riesz basis for $H$ if and only if $\{x_n\}$ is $w$-linearly independent and there are constants $0<A\leq B< \infty$ such that for any vector $x\in H$, 
 \begin{align}\label{frame inequality}
 A\|x\|^2\leq \sum_n |\langle x,x_n\rangle|^2 \leq B \|x\|^2 . 
 \end{align}
\end{lemma}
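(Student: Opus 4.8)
The plan is to recast the frame inequality (\ref{frame inequality}) in operator-theoretic terms and then play the lower frame bound (which yields a \emph{closed} range) against $w$-linear independence (which yields injectivity of the synthesis map). Introduce the analysis operator $C:\mathcal H\to\ell^2(I)$ given by $Cx=\{\langle x,x_n\rangle\}_{n}$, and its adjoint, the synthesis operator $D=C^\ast:\ell^2(I)\to\mathcal H$ given by $D\{c_n\}=\sum_n c_n x_n$. In this language (\ref{frame inequality}) reads $A\|x\|^2\le\|Cx\|^2\le B\|x\|^2$ for all $x\in\mathcal H$, which is the statement that $\{x_n\}$ is a frame with bounds $A,B$.

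For the forward implication (Riesz basis $\Rightarrow$ frame $+$ $w$-independence) I would use the first definition: write $x_n=Ue_n$ with $\{e_n\}$ an orthonormal basis and $U$ bounded and invertible. Then $\langle x,x_n\rangle=\langle U^\ast x,e_n\rangle$, so $\sum_n|\langle x,x_n\rangle|^2=\|U^\ast x\|^2$, and the elementary bounds $\|U^{-1}\|^{-2}\|x\|^2\le\|U^\ast x\|^2\le\|U\|^2\|x\|^2$ give (\ref{frame inequality}) with $A=\|U^{-1}\|^{-2}$, $B=\|U\|^2$. For $w$-independence, if $\sum_n c_n x_n$ converges to $0$ then $U\!\left(\sum_n c_n e_n\right)\to 0$, and applying the continuous $U^{-1}$ yields $\sum_n c_n e_n=0$; orthonormality forces every $c_n=0$.

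The substantial direction is the converse. The upper bound in (\ref{frame inequality}) says $\|C\|\le\sqrt{B}$, hence $D=C^\ast$ is bounded with $\|D\|\le\sqrt{B}$; this immediately gives the upper Riesz estimate $\|\sum_n c_n x_n\|^2\le B\sum_n|c_n|^2$ and shows $D$ is well defined on all of $\ell^2(I)$. The lower bound in (\ref{frame inequality}) gives $\|Cx\|\ge\sqrt{A}\,\|x\|$, so $C$ is bounded below, hence injective with \emph{closed} range; in particular $Cx=0$ forces $x=0$, which is exactly the density (completeness) of $\{x_n\}$ required by the characterization recalled after (\ref{RB-Ineq}). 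Now I invoke $w$-linear independence: if $\{c_n\}\in\ell^2(I)$ satisfies $D\{c_n\}=\sum_n c_n x_n=0$ (a convergent sum, since $D$ is bounded), independence forces $c_n=0$, so $D=C^\ast$ is injective. Thus $\ker C^\ast=\{0\}$, and because $\operatorname{Ran}C$ is closed we get $\operatorname{Ran}C=(\ker C^\ast)^\perp=\ell^2(I)$. Hence $C:\mathcal H\to\ell^2(I)$ is a bounded bijection, an isomorphism by the bounded inverse theorem, and so is $D=C^\ast$. Boundedness of $D^{-1}$ then yields $\|\sum_n c_n x_n\|^2=\|D\{c_n\}\|^2\ge\|D^{-1}\|^{-2}\sum_n|c_n|^2$, the lower Riesz bound. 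Combined with density and the upper bound, this is precisely the inequality characterization of a Riesz basis from (\ref{RB-Ineq}).

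The main obstacle is exactly the passage from injectivity of the synthesis operator $D=C^\ast$ to the lower Riesz estimate, since injectivity alone does not furnish a bounded inverse. The resolution is to combine it with the lower frame bound, which guarantees that $\operatorname{Ran}C$ is already closed; only then does $\ker C^\ast=\{0\}$ upgrade to $\operatorname{Ran}C=\ell^2(I)$, turning $C$ into an isomorphism and forcing $D^{-1}$ to be bounded. One should also note that the Riesz bounds, once verified for finitely supported $\{c_n\}$, extend to all of $\ell^2(I)$ by the boundedness of $C$ and $D$, as already observed after (\ref{RB-Ineq}).
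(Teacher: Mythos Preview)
Your argument is correct. Both directions are handled cleanly: in the forward direction the computation $\sum_n|\langle x,x_n\rangle|^2=\|U^\ast x\|^2$ together with the invertibility of $U$ yields the frame bounds, and the $w$-independence follows by pulling back through $U^{-1}$; in the converse the key step---combining the closed range of $C$ (from the lower frame bound) with $\ker C^\ast=\{0\}$ (from $w$-independence) to get $\operatorname{Ran}C=\ell^2(I)$---is exactly the right idea, and the bounded inverse theorem then produces the lower Riesz inequality.

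By comparison, the paper does not give a proof of this lemma at all: it merely remarks that $w$-linear independence of a Riesz basis follows from the existence of a biorthogonal sequence, and for the rest refers the reader to Christensen's book \cite{Ole_book_2008}. Your write-up therefore supplies what the paper omits. The operator-theoretic packaging you use (analysis/synthesis operators, closed-range argument) is in fact the standard route taken in that reference, so your approach is consistent with the cited source; the one place where your phrasing could be tightened is the sentence ``$U(\sum_n c_n e_n)\to 0$'', which tacitly assumes convergence of $\sum_n c_n e_n$---it is cleaner to first apply $U^{-1}$ to the convergent partial sums $\sum_{n\le N} c_n x_n$ to deduce convergence of $\sum_n c_n e_n$, and only then conclude.
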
 

The inequality in the preceding lemma is called the {\it frame inequality}. 
 The $w$-linearly independency of Riesz basis  follows from the fact that any Riesz basis has a biorthogonal sequence. For the proof we refer to \cite{Ole_book_2008}. 

 \section{Proof of Theorem \ref{wavelet set-necessary and sufficient condition}}\label{Wavelet sets}
 
 For the rest of this paper we fix the following notations unless it is stated otherwise: We let   $A$  denote a countable subset of   $\mathcal Aut(G)$, the group of automorphisms of $G$ onto $G$ with $\Delta(\alpha)\not\equiv 1$, $\alpha\in \mathcal Aut(G)$,   $\Lambda$  be a countable subset in $G$, and $\Omega$ be a subset of $\hat G$ with finite and non-zero     Haar measure. 
  
The first result of this section shows that the spectral property of a set    is preserved under the action of  automorphisms. 
\begin{lemma}\label{spectral-pair} Assume that $(\Omega, \Lambda)$ is a spectral pair and    $\alpha\in A$. Then  the pair $(\hat \alpha(\Omega), \alpha^{-1}(\Lambda^{-1}))$ is spectral, where $\Lambda^{-1}= \{-\lambda: \  \lambda\in \Lambda\}$.  
\end{lemma}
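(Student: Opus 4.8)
The plan is to transport the spectral structure from $L^2(\Omega)$ to $L^2(\hat\alpha(\Omega))$ by the unitary operator induced by the automorphism $\hat\alpha$. Concretely, I would define $U\colon L^2(\Omega)\to L^2(\hat\alpha(\Omega))$ by $(Uf)(\eta)=c\,f(\hat\alpha^{-1}(\eta))$ for $\eta\in\hat\alpha(\Omega)$, where $c>0$ is a normalization constant chosen to make $U$ an isometry. Since every topological automorphism of an LCA group scales its Haar measure by a fixed positive factor (the value at $\hat\alpha$ of the modular function of $\hat G$, whose existence follows from \cite{HwRoI} exactly as for $G$), the change-of-variables formula analogous to (\ref{modular-function}) shows that $U$ is a scalar multiple of a measure-preserving substitution, hence unitary for the appropriate $c$. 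As $\hat\alpha^{-1}$ carries $\hat\alpha(\Omega)$ bijectively onto $\Omega$, the operator $U$ is well defined and invertible.

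The key computation is the behaviour of $U$ on the exponentials. Using the convention $e_t(\xi)=\chi_\xi(t)$ together with Lemma \ref{property of adjoint}, for any $t\in G$ and $\eta\in\hat\alpha(\Omega)$ one has $e_t(\hat\alpha^{-1}(\eta))=\chi_{\hat\alpha^{-1}(\eta)}(t)=\chi_\eta(\alpha^{-1}(t))=e_{\alpha^{-1}(t)}(\eta)$, so that $U e_t=c\,e_{\alpha^{-1}(t)}$. I would apply this not to $\{e_\lambda\}$ directly, but to the conjugated family: since $\overline{e_\lambda}=e_{-\lambda}$ by (\ref{property of character}) and complex conjugation is a conjugate-linear isometric bijection of $L^2(\Omega)$, the hypothesis that $\{e_\lambda:\lambda\in\Lambda\}$ is an orthogonal basis of $L^2(\Omega)$ is equivalent to $\{e_{-\lambda}:\lambda\in\Lambda\}$ being one. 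Thus $U$ sends this orthogonal basis to $\{c\,e_{\alpha^{-1}(-\lambda)}:\lambda\in\Lambda\}$.

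Because $U$ is unitary it maps orthogonal bases to orthogonal bases, and deleting the harmless positive scalar $c$ shows that $\{e_t:t\in\alpha^{-1}(\Lambda^{-1})\}$ is an orthogonal basis for $L^2(\hat\alpha(\Omega))$; here I use that $\alpha^{-1}$ is a homomorphism, so $\alpha^{-1}(-\lambda)=-\alpha^{-1}(\lambda)$ and hence $\{\alpha^{-1}(-\lambda):\lambda\in\Lambda\}=\alpha^{-1}(\Lambda^{-1})$. This is exactly the assertion that $(\hat\alpha(\Omega),\alpha^{-1}(\Lambda^{-1}))$ is a spectral pair.

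I expect no serious obstacle: the content is a routine transport-of-structure argument, and the only points requiring care are (i) identifying the correct normalization $c$ via the modular/change-of-variables formula on $\hat G$ rather than on $G$, and (ii) the sign bookkeeping, which the passage through the conjugated family $\{e_{-\lambda}\}$ handles cleanly and explains the appearance of $\Lambda^{-1}$ in the statement.
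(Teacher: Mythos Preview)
Your proposal is correct and follows essentially the same transport-of-structure argument as the paper: the paper defines $T_\alpha(g)(\xi)=\Delta(\hat\alpha)^{-1/2}\,\overline{g(\hat\alpha^{-1}(\xi))}$, building the complex conjugation into a single (anti-)unitary operator, whereas you separate this into a linear unitary $U$ followed by the observation that conjugation sends $\{e_\lambda\}$ to $\{e_{-\lambda}\}$. Both routes land on the same computation $e_\lambda\mapsto c\,e_{\alpha^{-1}(-\lambda)}$ and the same conclusion; the difference is purely organizational.
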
 

\begin{proof} Define   $T_\alpha: L^2(\Omega) \to L^2(\hat\alpha(\Omega))$ by $T_\alpha(g)(\xi)=   \Delta(\hat\alpha)^{-1/2} \overline{g(\hat\alpha^{-1}(\xi))}, \ \xi\in \hat \alpha(\Omega)$.  $T$ is linear and 
 
 $$T_\alpha(e_\lambda)= \Delta(\hat\alpha)^{-1/2} e_{\alpha^{-1}(-\lambda)} \ , \quad       \lambda\in \Lambda$$
 
$T$ is a   unitary isomorphism  and $(\Omega, \Lambda)$ is a spectral pair, then $\{\Delta(\hat\alpha)^{-1/2} e_{\alpha^{-1}(-\lambda)}\}_{\lambda\in \Lambda}$ is an orthogonal basis for $ L^2(\hat\alpha(\Omega))$. This completes the proof of the lemma. 
\end{proof}

 \begin{proof}[ Proof of Theorem  \ref{wavelet set-necessary and sufficient condition}]   
 We shall ``somehow"   adjust   the proof of Theorem A for our situation.  Fix $A\subset \mathcal Aut(G)$ and $\Lambda_i\subset G$ and 
 let $\{\Omega_i\}_{i=1}^m$ be a collection of disjoint subsets in $\hat G$ such that $(\Omega_i, \Lambda_i)$ are spectral pairs. 
 
 Let  $\varphi_i\in L^2(G)$ be the inverse Fourier transform of $1_{\Omega_i}$, i.e.,    $\hat \varphi_i = 1_{\Omega_i}$. Define 
 \begin{align}\label{wavelet family}
   \mathcal W:= \cup_{i=1}^m\{\varphi_{i, \alpha, \lambda}: \alpha\in A, \lambda\in \Lambda_i\} .
   \end{align} 
   Then  by an  application of the Fourier transform and  the equation  (\ref{Fourier-transform-of-wavelets}), $\mathcal W$ is an orthogonal basis for $L^2(G)$ if and only if    the system $ \widehat{\mathcal W}$      is an orthogonal  basis for $L^2(\hat G)$: 
      \begin{align}\label{Fourier wavelet family}
  \widehat{\mathcal W}&=  \cup_{i=1}^m\{\widehat{\varphi_{i, \alpha, \lambda}}: \alpha\in A, \lambda\in \Lambda_i\}\\\notag
 & = \cup_{i=1}^m \{\Delta(\alpha)^{-1/2}  e_{\alpha^{-1}(-\lambda)} 1_{\hat\alpha(\Omega_i)}:  \alpha\in A, \lambda\in \Lambda_i \} .
  \end{align}

By the assumption, 
each pair $(\Omega_i, \Lambda_i)$ is a spectral. Therefore, for  any  $\alpha\in A$,  by Lemma \ref{spectral-pair}   the collection $\{e_{\alpha^{-1}(-\lambda)} 1_{\hat \alpha(\Omega_i)}:   \lambda\in \Lambda_i\}$ is an orthogonal basis for $L^2(\hat \alpha(\Omega_i))$. 
From the other hand, 
by multiplicative tiling property of $\{\Omega_i\}_{i=1}^m$,  for any   $\alpha, \beta \in A$ and $1\leq i, j \leq m$ we have 
 \begin{align}\label{intersection of sets} 
 m_{\hat G}(\hat\alpha(\Omega_i) \cap \hat\beta(\Omega_j)) =m_{\hat G}(\hat\alpha(\Omega_i)) \delta_{\alpha, \beta} \delta_{i, j}.    
 \end{align}
  Thus the elements of $\widehat{\mathcal W}$ are mutual orthogonal.  
  %
  The completeness of the system $\widehat{\mathcal W}$ in $L^2(\hat G)$ holds by the  $A$-multiplicative tiling of $\hat G$ and   the decomposition  
  \begin{align}\label{decomposition}
  L^2(\hat G)= \bigoplus_{\alpha\in A} \bigoplus_{1\leq i\leq m}L^2(\hat \alpha(\Omega_i)). 
  \end{align}
  Conversely, let $\{\Omega_i\}_{i=1}^m$  be a wavelet collection of sets with respect to the automorphisms $A$ and  translations $\Lambda_i$, $1\leq i\leq m$. Assume that $e\in \Lambda_i$ for all $i$. 
  Then the  elements of the  collection $\{\varphi_{i,\alpha}:= \varphi_{i,\alpha,0}: \  \alpha\in A, \ 1\leq i\leq m\}$ are mutual  orthogonal   and for   any     $(i,\alpha_1)\neq (j,\alpha_2)$     we have 
 \begin{align} 
0&= \int_{G}  \varphi_{i,\alpha_1}(x)   \overline{\varphi_{j,\alpha_2}(x)}  dm_G(x)\\\notag 
 & = \int_{\hat G} \hat \varphi_{i,\alpha_1}(\xi)\overline{\hat \varphi_{j, \alpha_2}(\xi)}  dm_{\hat G}(\xi)\\\notag
&= \int_{\hat G} 1_{\hat\alpha_1(\Omega_i)}(\xi) 1_{\hat\alpha_2(\Omega_j)}(\xi) dm_{\hat G}(\xi) \\\notag
&= m_{\hat G}(\hat\alpha_1(\Omega_i) \cap \hat\alpha_2(\Omega_j)) . 
 \end{align} 
 
 This implies that for 
 $\Omega=\dot \cup_{i=1}^m \Omega_i$, 
 the subsets $\hat\alpha(\Omega),\  \alpha\in A$ are  mutual disjoint  and  form a   covering for $\hat G$. Indeed,   assume that there is a subset  $W\subset \hat G$ of positive and finite Haar measure such that   $|W\cap \hat\alpha(\Omega)|=0$ for all $\alpha\in A$. Then  $|W\cap \hat\alpha(\Omega_i)|=0$  for all $1\leq i\leq m$. Then for the function  $f:G\to \Bbb C$ with  $\hat f = 1_W$ is  $\langle f, \varphi_{i,\alpha, \lambda}\rangle  = 
 \langle 1_W, \widehat{\varphi_{i, \alpha, \lambda}}\rangle=0$ for all 
  $i, \alpha$ and
   $\lambda\in \Lambda_i$. From the other hand, 
$\{\Omega_i\}_{i=1}^m$  is a wavelet collection of sets. Then $f$ must be identical to zero. This contradicts the assumption that $|W|>0$, hence $\Omega$ tiles $\hat G$ multiplicatively by $A$. 
 
For the rest, we prove that  the pairs $(\Omega_i, \Lambda_i)$ are spectral. To this end,  according to Lemma \ref{spectral-pair}, it is sufficient to show that for any $\alpha\in A$ each pair $(\hat\alpha(\Omega_i), \alpha^{-1}(-\Lambda_i))$ is a spectral. Or, equivalently, the system 
$$\{\Delta(\alpha)^{-1/2} e_{\alpha^{-1}(-\lambda)} 1_{\hat\alpha(\Omega_i)}: \ \lambda\in \Lambda_i \}$$ 
is orthogonal and complete in $L^2(\hat\alpha(\Omega_i))$. The orthogonality holds from the assumption that $\{\Omega_i\}_{i=1}^m$ is a wavelet collection of sets. For the completeness, assume that $g\in L^2(\hat\alpha(\Omega_i))$ such that $\langle g, e_{\alpha^{-1}(-\lambda)}\rangle_{L^2(\hat\alpha(\Omega_i))} =0$ for all $\lambda\in \Lambda_i$. This, along  the mutual disjointness of $\{\hat\alpha(\Omega_j): \alpha\in A, j\}$, yields 
$$\|g\|^2 =   \sum_{\beta\in A} \sum_{j=1}^m   \sum_{\lambda\in \Lambda_j}   |\langle g 1_{\hat\alpha(\Omega_i)}, \Delta(\beta)^{-1/2} 1_{\hat\beta(\Omega_j)} e_{\beta^{-1}(-\lambda)}\rangle|^2= 0 ,$$ 
which implies   that $g$ must be zero. Thus 
 $(\hat \alpha(\Omega_i), \alpha^{-1}(\Lambda^{-1}))$ is a spectral pair and we are done. 
 \end{proof} 
 
\begin{remark} Note that the multiplicative tiling property of the set $\Omega$ for $\hat G$  is required for the completeness of the wavelet system in $L^2(G)$. However, the results hold for a  subspace of $L^2(G)$ if we only assume that   $\Omega$ is a multiplicative tiling for a subset of $\hat G$. \\
\end{remark}

 \begin{example}\label{epsilon example} We shall use the multiplicative tiling property of the 
 Shannon set to construct a simple example of one dimensional orthogonal wavelet basis for a subspace of $L^2(\Bbb R)$.   Let $\Omega_1:=[-1,-1/2-\epsilon_1)$ and $\Omega_2:= (1/2+\epsilon_2, 1]$ where $0\leq \epsilon_i<1/2$ . Then $(\Omega_i,   \alpha_i \Bbb Z)$  is a spectral pair  with   $\alpha_i= 2(1-2\epsilon_i)^{-1}$.  
 The set $\Omega= \Omega_1 \cup \Omega_2$ is 
 a multiplicative tiling for some subset $U\subset \Bbb R$  with respect to the dyadic dilations $\{2^n: \ n\in \Bbb Z\}$. This  along with  the result of Theorem \ref{wavelet set-necessary and sufficient condition} implies that  the  collection $\widehat{\mathcal W}$ (\ref{Fourier wavelet family}) is an orthogonal basis for  $L^2(U)$. 
 \end{example}

 \section{Proof of Theorems  \ref{Riesz wavelet set} and  \ref{RB implies disjointness}}\label{Reisz wavelet sets}
  To prove Theorem \ref{Riesz wavelet set}, we need to prove 
 an analogous result to    Lemma  \ref{spectral-pair} for Riesz spectral pairs, as follows. 
\begin{lemma}\label{riesz spectral pair} 
 Let $(\Omega, \Lambda)$  be a Riesz spectra with Riesz constants $A$ and $B$. Then for any $\alpha\in A$, the pair $(\hat \alpha(\Omega), \alpha^{-1}(\Lambda^{-1}))$ is a Riesz spectral with  unified Riesz constants. 
 \end{lemma}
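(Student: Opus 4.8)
The plan is to recycle the unitary isomorphism $T_\alpha\colon L^2(\Omega)\to L^2(\hat\alpha(\Omega))$ already built in the proof of Lemma~\ref{spectral-pair}, namely $T_\alpha(g)(\xi)=\Delta(\hat\alpha)^{-1/2}\,\overline{g(\hat\alpha^{-1}(\xi))}$, and to exploit the fact that a Riesz basis is precisely the image of an orthonormal basis under a bounded invertible operator, so that the Riesz inequality~(\ref{RB-Ineq}) is stable under any \emph{unitary} map. Since $T_\alpha$ is unitary and onto, if $\{e_\lambda\}_{\lambda\in\Lambda}$ is a Riesz basis for $L^2(\Omega)$ with constants $A$ and $B$, then $\{T_\alpha(e_\lambda)\}_{\lambda\in\Lambda}$ is automatically a Riesz basis for $L^2(\hat\alpha(\Omega))$ with the \emph{same} constants, because $\|\sum_\lambda c_\lambda T_\alpha(e_\lambda)\|=\|T_\alpha\sum_\lambda c_\lambda e_\lambda\|=\|\sum_\lambda c_\lambda e_\lambda\|$ for every finite scalar sequence, while completeness transfers through the surjection.

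Next I would account for the scalar normalization. As recorded in Lemma~\ref{spectral-pair}, $T_\alpha(e_\lambda)=\Delta(\hat\alpha)^{-1/2}e_{\alpha^{-1}(-\lambda)}$, so the exponential family $\{e_{\alpha^{-1}(-\lambda)}\}_{\lambda\in\Lambda}$ indexing the pair $(\hat\alpha(\Omega),\alpha^{-1}(\Lambda^{-1}))$ differs from the Riesz basis $\{T_\alpha(e_\lambda)\}$ only by the uniform positive factor $\Delta(\hat\alpha)^{1/2}$. Pulling this constant out of the middle term of~(\ref{RB-Ineq}) rescales both bounds by $\Delta(\hat\alpha)$, giving $\Delta(\hat\alpha)A\sum|c_\lambda|^2\le\|\sum_\lambda c_\lambda e_{\alpha^{-1}(-\lambda)}\|^2_{L^2(\hat\alpha(\Omega))}\le\Delta(\hat\alpha)B\sum|c_\lambda|^2$. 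This is the sense in which the constants are ``unified'': they are the original $A,B$ scaled by the single modular factor $\Delta(\hat\alpha)$, independent of $\lambda$; and this is exactly the factor that the normalization $\Delta(\alpha)^{-1/2}$ built into the wavelet functions $\varphi_{i,\alpha,\lambda}$ will later cancel, restoring $A,B$ in Theorem~\ref{Riesz wavelet set}.

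As an alternative that sidesteps the conjugation operator, I would run a direct change of variables: writing $\|\sum_\lambda c_\lambda e_{\alpha^{-1}(-\lambda)}\|^2_{L^2(\hat\alpha(\Omega))}$ as an integral over $\hat\alpha(\Omega)$, substituting $\eta=\hat\alpha^{-1}(\xi)$ with $dm_{\hat G}(\xi)=\Delta(\hat\alpha)\,dm_{\hat G}(\eta)$ from~(\ref{measure of a dilated set}), and using $\chi_\xi(\alpha^{-1}(\cdot))=\chi_{\hat\alpha^{-1}(\xi)}(\cdot)$ from Lemma~\ref{property of adjoint}, collapses the integral to $\Delta(\hat\alpha)\int_\Omega|\sum_\lambda\overline{c_\lambda}\,e_\lambda(\eta)|^2\,dm_{\hat G}(\eta)$. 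The Riesz inequality for $(\Omega,\Lambda)$ together with $\sum|\overline{c_\lambda}|^2=\sum|c_\lambda|^2$ then finishes the estimate with the same bounds $\Delta(\hat\alpha)A$ and $\Delta(\hat\alpha)B$.

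The routine parts are the measure change and the density bookkeeping; the one genuinely delicate point is the handling of the complex conjugation in $T_\alpha$, which interchanges $c_\lambda$ and $\overline{c_\lambda}$ --- harmless for the $\ell^2$-norm but easy to misplace --- together with keeping the modular factor $\Delta(\hat\alpha)$ consistently on the correct side so that the final constants match the normalization conventions used downstream. Completeness I would settle either by surjectivity of $T_\alpha$ or, if preferred, by verifying the frame inequality~(\ref{frame inequality}) together with $w$-linear independence via Lemma~\ref{Riesz basis is a frame}.
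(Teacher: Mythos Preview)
Your approach is correct and is exactly the one the paper takes: the paper's proof is a single sentence invoking the operator $T_\alpha$ from Lemma~\ref{spectral-pair} and the fact that the image of a Riesz basis under a (conjugate-)unitary bijection is again a Riesz basis with the same constants. Your extra bookkeeping on the $\Delta(\hat\alpha)$ rescaling and the alternative change-of-variables computation are both valid elaborations, but they go beyond what the paper records.
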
 
 \begin{proof}  The proof is straight-forward and uses an application of the operator $T_\alpha$ in Lemma \ref{spectral-pair}, and the fact the 
 image of a Riesz basis under any unitary map is Riesz basis with the unified constants.  
 \end{proof} 
\begin{proof}[Proof of Theorem \ref{Riesz wavelet set}] 
    By unitary Fourier transform 
it is sufficient to show that   
$$\widehat{\mathcal W}:=\cup_{i=1}^m \{\hat\psi_{i, \alpha, \lambda_i}\} = \cup_{i=1}^m \{ \Delta(\alpha)^{-1/2}   e_{\alpha^{-1}(-\lambda_i)}(\xi)  1_{\hat\alpha(\Omega_i)}: \alpha\in A,  \lambda_i\in \Lambda_i\}$$ 
 is  a Riesz basis for  $L^2(\hat G)$. By the assumptions, for any $i$, the pair 
 $(\Omega_i, \Lambda_i)$ is  Riesz spectral.  Therefore  by Lemma \ref{riesz spectral pair}   for any $\alpha\in A$ 
   the pair $(\hat \alpha(\Omega_i), \alpha^{-1}(-\Lambda_i))$ is  Riesz spectral    with the unified constants. 
 Then the completeness of the system $\widehat{\mathcal W_{A,T}}$ in $L^2(\hat G)$ holds by the $A$ multiplicative tiling assumption of   $\Omega=\cup_i \Omega_i$ for $\hat G$ and the  decomposition  $L^2(\hat G) =  \bigoplus_{\alpha\in A} L^2(\hat\alpha(\Omega)) =  \bigoplus_{\alpha\in A, 1\leq i \leq m} L^2(\hat\alpha(\Omega_i))$. To prove the Riesz sequence approximation inequalities, we continue as follows: 
  Let $\{c_{\alpha,\lambda_i}: \alpha\in A, \lambda_i\in \Lambda_i, i=1, \cdots, m\}$ be a finite collection of numbers. Then, due the disjointness of $\alpha(\Omega_i)$, $i$,   and that the pairs $(\alpha(\Omega_i), \Lambda_i)$ are Riesz spectral with upper bound $U_i$,   we can write the following: 
\begin{align}\|\sum_{\alpha,i,\lambda_i} c_{\alpha,\lambda_i} \hat\psi_{i,\alpha, \lambda_i}\|_{L^2(\hat G)}^2 
 &= 
 \sum_\alpha \sum_{i=1}^m \|\sum_{\lambda_i} c_{\alpha,\lambda_i} \hat\psi_{i,\alpha, \lambda}\|_{L^2(\hat\alpha(\Omega_i
 ))}^2 \\
 &\leq \sum_{i=1}^m U_i  \sum_\alpha \sum_{\lambda_i} |c_{\alpha,\lambda_i}|^2  \\
 &=  U  \sum_{\alpha,i, \lambda_i} |c_{\alpha,\lambda_i}|^2 \ , 
 \end{align} 
 with $U=\max\{U_i: \ 1\leq i\leq m\}$. 
The lower estimate for Riesz sequence can be obtained in the same fashion. 
 This completes the proof of the theorem.  
  \end{proof} 
 
Note that the completeness of the Riesz system in the previous theorem can also be  obtained  by a different approach:  Let $f\in L^2(G)$ such that $\langle f, \hat\psi_{i,\alpha, \lambda}\rangle =0$  for all $\lambda, \alpha$ and $1\leq i\leq m$. Fix $\alpha$ and $i$.  Then $\langle f1_{\alpha(\Omega_i)}, \hat\psi_{i,\alpha, \lambda}\rangle =0$ for all $\lambda\in \Lambda_i$. Since $(\Omega_i, \Lambda_i)$ is a Riesz spectral, and $\{ \hat\psi_{i,\alpha, \lambda}\}$ is complete in $L^2(\alpha(\Omega_i))$, then we must have $f1_{\alpha(\Omega_i)}=0$. But $f=\oplus_{i,\alpha} f1_{\alpha(\Omega_i)}$, which implies  that  $f$ must be  zero.

  \begin{proof}[Proof of Corollary \ref{Riesz wavelet set and tiling}]   
  Assume that $\Omega_i$  tiles $\hat G$ additively  by some lattice $\Gamma_i$ with multiplicity $k_i$. Then by \cite[Theorem 4.1]{Agoraa-Antezanaa-Cabrelli}, for each $i$,  there is a countable set $\Lambda_i\subset G$ such that $(\Omega_i, \Lambda_i)$ is a Riesz spectral pair in $\hat G$. The rest of the proof is immediate from Theorem \ref{Riesz wavelet set}.  
 \end{proof}
 
 \begin{example} Let $\Lambda$ be a lattice such that, $\Omega_1, \Omega_2, \cdots, \Omega_k$, be  $k$ mutual disjoint  fundamental domains for $\Lambda$. Then $\Omega= \cup_{i=1}^k \Omega_i$ is an additive tiling set for $\hat G$ with multiplicity  $k$ with respect to   translations $\Lambda$. Moreover, assume that $\Omega$ is a multiplicative tiling with respect to  a subset of automorphisms.  Then   $\Omega$ is a Riesz wavelet set.  In this connection, see 
   Example \ref{epsilon example} in Section \ref{Wavelet sets}. \\
  \end{example}

  \begin{remark} 
   Let $(\Omega_i, \Lambda)$, $1\leq i\leq m$,  be spectral pair, i.e., the exponentials $\mathcal E_\Lambda$ form an orthogonal basis for $L^2(\Omega_i)$. If   $\{\Omega_i\}$ are mutual disjoint, then   $\Omega= \cup_{i=1}^m \Omega_i$ is a Riesz spectral set with spectra $\Lambda$. 
 To see this,  
note that for any finite $\{c_\lambda\}_{\lambda\in \Lambda}$ we have 
\begin{align}\label{equation}
\|\sum_{\lambda} c_\lambda e_\lambda\|_{L^2(\Omega)}^2 = \sum_{1\leq i\leq m} \|
 \sum_{\lambda} c_\lambda e_\lambda\|_{L^2(\Omega_i)}^2  = \sum_{1\leq i\leq m}  
 \sum_{\lambda} |c_\lambda|^2.  
 \end{align} 
 
 The completeness of  $\mathcal E_\Lambda$ in $L^2(\Omega_i)$  for    disjoint sets $\Omega_i$, $1\leq i\leq m$,  implies that  $\mathcal E_\Lambda$ is complete in $L^2(\Omega)$, too.     This  along the equation (\ref{equation}) and Lemma \ref{Parseval-type identity}  completes the proof. 
 \end{remark}

\begin{proof}[Proof of Theorem \ref{RB implies disjointness}] To prove the ``if\rq\rq{} part,  let $\alpha\neq \beta$ and let  $M$ be any subset   of  $\hat\alpha(\Omega)\cap \hat\beta(\Omega)$.  Put $f:=1_M$.  We  show that  $M$ must have zero measure. Since $f\in L^2(\hat G)$ and also belongs to the spaces   $ L^2(\hat\alpha(\Omega)) $ and $ L^2(\hat\beta(\Omega))$, then by the  theorem\rq{}s assumptions, there are $l^2$ sequences  $\{c_{\theta, \lambda}: \ \theta\in  A, \lambda\in \Lambda\}$,   $\{d_{\lambda}^\alpha: \ \lambda \in \Lambda\}$ and $\{d_{\lambda}^\beta: \ \lambda \in \Lambda\}$
 for which  we can write 
$$1_M= \sum_{\theta, \lambda} c_{\theta, \lambda} \widehat{\delta_\theta \tau_\lambda \psi} = \sum_{\lambda } d_{\lambda}^\alpha \widehat{\delta_\alpha \tau_{\lambda} \psi} =  \sum_{\lambda} d_{\lambda}^\beta \widehat{\delta_\beta \tau_{\lambda} \psi} . $$
Since the Riesz bases are $w$-linearly independent (Lemma \ref{Riesz basis is a frame}),  the preceding first and second  equalities imply that we must have $ c_{\alpha, \lambda}= d_\lambda^\alpha$ and $ c_{\theta, \lambda}= 0$ for $\theta\neq \alpha$. With a similar argument, by the first and third  equalities  we obtain $c_{\beta, \lambda}= d_\lambda^\beta$ and 
$ c_{\theta, \lambda}= 0$ for $\theta\neq \beta$. These  conclude that $d_\lambda^\alpha = d_\lambda^\beta=0$ for all $\lambda$, hence $1_M$ is the zero function and $|M|=0$. This  implies the measure disjointness of $\alpha(\Omega)$. To prove that $\{\hat \alpha(\Omega): \alpha\in A\}$ is a cover for $\hat G$, we shall use a  
  contradiction approach and apply the frame inequality (\ref{frame inequality}) in Lemma \ref{Riesz basis is a frame} for the wavelet Riesz basis $\mathcal W=\{\delta_\alpha \tau_\lambda \psi: \alpha\in A, \ \lambda\in \Lambda\}$, respectively. 

To prove the ``only if\rq\rq{} part, notice that   the disjointness of   sets $\hat\alpha(\Omega)$ with the frame inequality (\ref{frame inequality}) imply  that for any given $\alpha\in A$, the exponentials $\mathcal E_\Lambda$ are complete in $L^2(\hat\alpha(\Omega))$. The Riesz inequality for $\mathcal E_\Lambda$  is a direct implication of the Riesz inequality for the wavelet system $\mathcal W$. 
\end{proof}

{\it Remark:} Note that  in  Theorem \ref{RB implies disjointness}, only the 
$w$-linearly independency of the  system  $\{\delta_\alpha \tau_t \psi\}$ yields 
  the disjointness of  sets $\alpha(\Omega)$.

 \section{Examples}\label{examples}
 \begin{example}\label{wavelet sets}
 Let $\Omega= [-1,1]^d \setminus [-1/2,1/2]^d$. Fix a lattice $\Lambda$ and  write $\Omega$   as a  finite union  of $\Omega_i$ such that  each $\Omega_i$ is a  tiling with respect  the lattice $\Lambda$. For example, when $d=2$, one can take $\Lambda=\{(m/2, n/2): m, n\in \Bbb Z\}$.  It is easy to see that $\{\Omega_i\}$ is a wavelet collection of sets.  Indeed, each pair $(\Omega_i, \Lambda^\perp)$ is a spectral pair and $\Omega$ is a multiplicative tiling set for $\Bbb R^d$ by the automorphisms $A= \{2^n I:  \ n\in \Bbb Z\}$,  where $I$ is the $d\times d$  identity matrix. The result now follows by  Theorem \ref{wavelet set-necessary and sufficient condition}. (For more examples of wavelet sets in  $\Bbb R^d$ see e.g. \cite{BB2009}.)\\
 \end{example} 
 
 \begin{example} Let $\Omega_i$ are given as in Example \ref{wavelet sets}. Assume that    $\Omega:=\cup_i \Omega_i$ is a  multiplicative  tiling with respect to the dyadic dilations $\{2^{n}I : n\in \Bbb Z\}$. 
 Take $\Lambda:= 4^{-1} \Bbb Z^d$. Then each pair $(\Omega_i, \Lambda)$ tiles $\Bbb R^d$ additively with multiplicity  $4$, i.e., almost every point in $\Bbb R^d$ is covered four times by  $\Lambda$ translations of each  $\Omega_i$.  Therefore $\Omega_i$ is a  Riesz spectral set with spectra  $\Lambda$. Theorem \ref{Riesz wavelet set} implies that $\{\Omega_i\}$ is a Riesz wavelet collection of sets  in  $\Bbb R^d$. 
 \\
 \end{example}

 \begin{example} 
 Let $\Omega_1$ be a symmetric polygon  in $\Bbb R^2$ centered at origin. Take $\Omega_2= 2^{-1} \Omega_1$, the dilation of $\Omega_1$ by scale $2^{-1}$. Define $\Omega= \Omega_1\setminus \Omega_2$. Then it is easy to check that $\Omega$ tiles the plane mutliplicatively with respect to the dyadic dilations $\{2^nI: n\in \Bbb Z\}$. 
 Assume that $\Omega_i, \cdots , \Omega_m$ be $m$ convex tiles of $\Omega$ which are pairwise disjoint. Furthermore, we assume that each $\Omega_i$ is a symmetric polygon with respect to a basis for $\Bbb R^2$. Since every convex and symmetric polygon in $\Bbb R^2$ has a Riesz spectra (\cite{Lyubarskii-Rashkovskii2000}), then  each $\Omega_i$ is a Riesz spectral set with spectra $\Lambda_i$.   Theorem  \ref{Riesz wavelet set} implies that   $\{\Omega_i\}_1^m$ is a Riesz wavelet collection of sets for $L^2(\Bbb R^2)$.   \\
 \end{example} 
 
 The following example illustrates  an approach to the construction of   an orthogonal wavelet basis  on $G$  induced by a wavelet set in a subgroup $K$.  
 
 \begin{example} 
Let $G$ be a locally  compact abelian group  which is topologically isomorphic to 
 $\Bbb R^n \times D\times K,$ where $D$     discrete   abelian group and $K$ is compact.  (Indeed, by  Theorem 24.30 of \cite{HwRoI},   every LCA group has this form.)  Assume that  $n$ is a non-negative integer and $D$ is a finite direct sum of finite abelian cyclic groups of prime power order, i.e., $p^r$, $p$ prime. Let $T:=\widehat K$ denote the  Pontryagin dual   group of $K$. $T$ is discrete and a spectra for $K$. 
 Therefore,  for given any spectral set $S$ in $\Bbb R^d$ with spectra $\Lambda$,   $\Omega:= S\times D\times K$ is  spectral   for $G$ with spectra $\Lambda\times D\times T$. 
 
Furthermore, assume that 
  $S$ tiles $\Bbb R^d$  multiplicatively with respect to a subset of automorphisms $A\subset GL(\Bbb R, d)$. For any $\alpha\in   A$ define $t_\alpha: G \to G$ with $t_\alpha(x,d,k) := (\alpha(x), k)$. Then $t_\alpha, \  \alpha\in A,$ is an automorphisms of $G$  and $\{t_\alpha(S\times D\times K): \alpha\in A\}$ is a mutual disjoint tiling (a partition) for $G$. This implies that  $\Omega$ is a wavelet set  for  $L^2(G)$.  \\
 \end{example}

\end{document}